\numberwithin{equation}{section} 
\newcommand{\ud}{\,d} 
\newcommand{\R}{\mathbb{R}}
\newcommand\cof{\operatorname{cof}}
\newcommand\tr{\operatorname{tr}}
\newcommand\sym{\operatorname{sym}}
\renewcommand{\div}{\operatorname{div}}
\newcommand{\tir}[1]{\ensuremath{\overline {#1}}} 
\newtheorem{thm}{Theorem}[section] 
\newtheorem{lemma}[thm]{Lemma} 
\newtheorem{prop}[thm]{Proposition} 
\newtheorem{defn}[thm]{Definition} 
\newtheorem{rem}[thm]{Remark}
\def\whsq{\vbox to 5.8pt 
{\offinterlineskip\hrule 
\hbox to 5.8pt{\vrule height 
5.1pt\hss\vrule height 5.1pt}\hrule}}
\def\<{\langle} 
\def\>{\rangle} 
\def\PP{{\mathop{{\rm I}\kern-.2em{\rm P}}\nolimits}} 
\def\FF{{\mathop{{\rm I}\kern-.2em{\rm F}}\nolimits}}   
\def\ZZ{{\mathop{{\rm I}\kern-.2em{\rm Z}}\nolimits}} 
\newlength{\sidemargin} 
\begin{document}

\title[]{
On standard finite difference discretizations of the elliptic Monge-Amp\`ere equation}

\author{Gerard Awanou}
\address{Department of Mathematics, Statistics, and Computer Science, M/C 249.
University of Illinois at Chicago, 
Chicago, IL 60607-7045, USA}
\email{awanou@uic.edu}  
\urladdr{http://www.math.uic.edu/\~{}awanou}

\maketitle

\begin{abstract}
Given an orthogonal lattice with mesh length $h$ on a bounded convex domain $\Omega$, we 
propose to approximate the Aleksandrov solution of the Monge-Amp\`ere equation by regularizing the data and discretizing the equation in a subdomain using the standard finite difference method. The Dirichlet data is used
to approximate the solution in the remaining part of the domain. We prove the uniform convergence on compact subsets of the solution of the discrete problems to an approximate problem on the subdomain. 
The result explains the behavior of methods based on the standard finite difference method and designed to numerically converge to non-smooth solutions. We propose an algorithm which for smooth solutions appears faster than the popular Newton's method with a high accuracy for 
non smooth solutions. 
The convergence of the algorithm is independent of how close to the numerical solution the initial guess is, upon rescaling the equation and given a user's measure of the closeness of an initial guess. 
\end{abstract}

\section{Introduction}
Let $\Omega$ be  a bounded convex domain of $\R^d, d \geq 2$ and let 
$g \in C(\partial \Omega)$, 
$f \in C(\Omega)$ with $0 < c_0 \leq  f \leq c_1$ for constants  $c_0, c_1 \in \R$. 
We assume that $g \in C(\partial \Omega)$ can be extended to a function $\tilde{g} \in C(\tir{\Omega})$ which is  convex in $\Omega$.
We are interested in the finite difference approximation of the Aleksandrov solution of the Monge-Amp\`ere equation
\begin{equation} \label{m1}
\det D^2 u =f \, \text{in} \, \Omega,  u =g \, \text{on} \, \partial \Omega.
\end{equation}
To a convex function $u$, one associates a measure $M[u]$ and \eqref{m1} is said to have an Aleksandrov solution if the density of $M[u]$ with respect to the Lebesgue measure is $f$. If $u \in C^2 (\Omega)$, $M[u]$ is a measure with density  $\det D^2 u$, where $D^2 u=\bigg( (\partial^2 u) / (\partial x_i \partial x_j)\bigg)_{i,j=1,\ldots, d}  $ 
is the Hessian of $u$. 
There are several equivalent definitions of the Monge-Amp\`ere measure in the general case and the simplest approach is to use an analytic definition based on approximation by smooth functions. See section \ref{Aleks-def} and \cite{Rauch77} for the equivalent definitions. 

We propose to approximate the Aleksandrov solution of \eqref{m1} by regularizing the data and discretizing the equation in a subdomain using the standard finite difference method. The Dirichlet data is used
to approximate the solution in the remaining part of the domain. We prove the uniform convergence on compact subsets of the solution of the discrete problems to an approximate problem on the subdomain. 

Our numerical algorithms are of the time marching types with proven convergence. The time marching method, with the central finite difference discretization previously used in \cite{Benamou2010,Awanou-k-Hessian12}, appears faster than Newton's method {\it for smooth solutions}. In some cases 15 times faster. It is shown to be numerically robust for non smooth solutions of the Monge-Amp\`ere equation with right hand side absolutely continuous with respect to the Lebesgue measure. The convergence is shown to be independent of the closeness of an initial guess upon rescaling the equation. We introduce a compatible discretization in the sense that it reproduces at the continuous level essential features of the continuous problem. We prove for smooth solutions an asymptotic convergence rate  of the discretization. We observed that when the time marching method  for the central discretization is used to provide a starting point for the time marching method for the compatible discretization, one reaches a high accuracy for non smooth solutions.

\subsection{Methodology for smooth solutions}
We introduce a compatible discretization which allows us to give a proof of convergence of the discretization for smooth solutions and a proof of an asymptotic convergence rate similar to the proofs for the finite element discretization of \eqref{m1}, c.f. \cite{Awanou-Std01} and the references therein. Two key ideas used in this paper, which were not used in the finite element papers \cite{Bohmer2008,Feng2009,Brenner2010b}, are the use of the continuity of the eigenvalues of a matrix as a function of its entries and rescaling the equation.  

\subsection{Methodology for non smooth solutions} \label{methodns} 
We regularize the data by considering functions $f_m, g_m \in C^{\infty}(\tir{\Omega})$ such that $0 < c_2 \leq f_m \leq c_3 $, $f_m$ converges uniformly to $f$ on $\tir{\Omega}$  and $g_m$ converges uniformly to $\tilde{g}$ on $\tir{\Omega}$. See \cite{MongeC1Alex} for an example. The second key idea of this paper is to consider a sequence of smooth uniformly convex subdomains $\Omega_s$ which converges to $\Omega$ \cite{Blocki97}.

We consider in this paper ''interior'' discretizations. By this, we mean that we prove convergence of the discretization in an interior domain. Values at mesh points closest to the boundary are approximated using the boundary values. 
Let $ \delta >0$ be a small parameter. 
We will need a theoretical computational domain $\widetilde{\Omega}$ chosen as a subdomain of $\Omega$. 
We require that
$$
 \widetilde{\Omega} \subset \Omega_s, \ \text{for all} \ s.
$$

It is known, c.f. \cite{Awanou-Std04} or \cite[Proposition 2.4 ]{Savin13}, that the  Aleksandrov solution of
\begin{equation}
\det  D^2 u_m = f_m  \ \text{in} \, \Omega, \, u_m=g_m \, \text{on} \, \partial \Omega, 
\label{m1-m}
\end{equation}
converges uniformly on compact subsets of $\Omega$ to the  Aleksandrov solution $u$ of \eqref{m1}.
 
We choose $\tilde{m}$ 
such that $| f(x) - f_{\tilde{m}}(x)| < \delta$, $| g(x) - g_{\tilde{m}}(x)| < \delta$ and $| u(x) - u_{\tilde{m}}(x)| < \delta$ for all $x \in \tir{\Omega}$.

We show in this paper that given a mesh on $\Omega$, an ''interior''  discretization (c.f. \eqref{m1h-altX} and \eqref{m1hX} below) of the problem
\begin{equation} \label{m11}
\det D^2 u_{\tilde{m} s} =f_{\tilde{m}} \, \text{in} \, \widetilde{\Omega},  u_{\tilde{m} s} =u_{\tilde{m}} \, \text{on} \, \partial \widetilde{\Omega}.
\end{equation}
has a unique local solution $u_{\tilde{m} s,h}$ which is a discrete convex function. 
We discretize the Hessian using the standard finite difference method and show that the solution $u_{h}$ of the resulting discrete problem, is the limit of a subsequence in 
$s$ of $u_{\tilde{m} s,h}$ where $u_{\tilde{m} s,h}$ is the finite difference approximation of the solution $u_{\tilde{m} s}$ of \eqref{m11}. We prove that $u_h$ converges uniformly on compact subsets of $\widetilde{\Omega}$ to the solution $\tilde{u}$
of 
\begin{equation}
\det  D^2 \tilde{u} = f_{\tilde{m}}  \ \text{in} \, \widetilde{\Omega}, \, \tilde{u}=u_{\tilde{m}} \, \text{on} \, \partial \widetilde{\Omega}. 
\label{m1m}
\end{equation}
The solution $u$ of \eqref{m1} can then be approximated within a prescribed accuracy by first choosing $\tilde{m}$ and then $h$ sufficiently small. We emphasize that the solution $\tilde{u}$ of \eqref{m1m} is not necessarily smooth.

A technical aspect of the proof is that we use {\it interior} second order derivative estimates of the solution $u_{ms}$ as the latter may blow up on the boundary $\partial \Omega$ if the latter is not strictly convex. 
 As a consequence of the interior Schauder estimates, we obtain stability on compact subsets of $\widetilde{\Omega}$ of  the discretization. This is one of the  main contributions of the paper and is treated in section \ref{cvg}.

For simplicity, the dependence of $\tilde{u}$ on $\tilde{m}$ is not indicated. By unicity of the Aleksandrov solution $u_m$ of \eqref{m1-m}, 
we have $\tilde{u}=u_{\tilde{m}}$ in $\widetilde{\Omega}$ and hence as $\widetilde{\Omega} \to \Omega$, $u_{\tilde{m}}|_{\partial \widetilde{\Omega}} \to g|_{\partial \Omega}$.   Thus, from a practical point of view, for the implementation, we see that one can take $\widetilde{\Omega}=\Omega$, $f_{m}=f$ with $u_h=g$ on $\partial \Omega$. It is in that sense that the results of this paper explains the behavior of methods based on the standard finite difference method and designed to numerically converge to non-smooth solutions. 


\subsection{Significance of the results in relation with other work}

A proven convergence proof for Aleksandrov solutions was given for the two dimensional problem for the discretization proposed in \cite{Oliker1988}. The approach through the so-called viscosity solutions was considered in \cite{Oberman2010a} in the context of monotone finite difference schemes. 
We classify these methods as nonstandard. 
While the discretization proposed in \cite{Froese13} uses a standard discretization in parts of the domain, it is still a nonstandard discretization as it uses a monotone scheme in parts of the domain.

The central finite difference discretizations is popular in science and engineering \cite{Headrick05,Chen2010b,Chen2010c}. 
However solving the resulting nonlinear discrete system of equations  by Newton's method produces disastrous results when \eqref{m1} has a non smooth solution. Ever since the pioneering work \cite{Dean2004}, various approaches have been proposed to solve the nonlinear equations for convergence to non smooth solutions e.g. \cite{Benamou2010,Mohammadi2007}.  Despite the efficiency of the method proposed in \cite{Mohammadi2007}, the {\it fundamental} question of a proof of convergence of the discretization had not been solved. 

In \cite{Awanou-k-Hessian12}, for smooth solutions, we proved the quadratic convergence rate of the central finite difference discretization and the convergence of Newton's method for solving the resulting nonlinear system of equations.  The quadratic convergence rate of the central finite difference discretization for smooth solutions was only known as `` formally second-order accurate''  \cite{Benamou2010}. 

One can make an analogy between numerical methods for the Monge-Amp\`ere equation and the setting of the elementary Newton's method for solving a nonlinear equation $p(x)=0$ with multiple real roots. It is well known that this is an efficient method depending on the initial guess. The situation is better for the Monge-Amp\`ere equation as one is interested in a certain kind of numerical solutions and the equation can be rescaled making the various possible solutions far from each other. Thus the results of this paper do not contradict the observations made in \cite[Section 1]{Feng2013}.

The issues pertinent to the analysis of standard discretizations can be summarized as follows.
\begin{enumerate}
\item Prove the existence and uniqueness of a solution to the discrete problem
\item Prove the convergence of the discretization
\item Prove the convergence of an iterative method for solving the discrete problem.
\end{enumerate}

We address all three issues completely in this paper. The distinguished feature of the methods discussed in this paper, like the ones discussed in  \cite{Benamou2010}, is to preserve weakly convexity in the iterations, c.f. Remark \ref{subh-rem}. In the iterations, the positivity of the discrete Laplacian is preserved. This feature allows the processes to avoid spurious solutions. Our numerical experiments add to the growing evidence of the effectiveness of the standard finite difference method for the Monge-Amp\`ere equation \cite{Benamou2010,Oberman2010b,Froese13}. In both \cite{Oberman2010b,Froese13}, the effectiveness of the standard finite difference method for smooth solutions was taken advantage of. Since the fast convergence of Newton's method for smooth solutions was one of the motivations behind the discretizations proposed in \cite{Froese13,Oberman2010a,Oberman2010b}, it is reasonable to expect that the time marching method would be faster than these methods, at least for smooth solutions.

We use an abstract treatment of the Monge-Amp\`ere measure, see section \ref{Aleks-def}, which shows a close connection with the more natural definition of $\det D^2 u(x)$ for smooth solutions. We believe that this connection, which also encapsulates the geometric structure of the Monge-Amp\`ere equation \cite{Rauch77}, is the prime reason standard discretizations work for Monge-Amp\`ere type equations. 


A standard finite difference discretization of the Dirichlet problem for the Monge-Amp\`ere equation was introduced in \cite{Dean06b}. Finite element discretizations have also been proposed, e.g. \cite{GlowinskiICIAM07,Bohmer2008,Feng2009,Brenner2010b,Lakkis11b,Davydov12,Glowinski2014}. 
Since we use standard discretizations, the efficient tools developed for computational mathematics such as adaptive mesh refinements and multigrid algorithms can be transferred seamlessly to the Monge-Amp\`ere context. 

The lack of a maximum principle for the discretizations analyzed in this paper is related to the difficulty of proving stability of the discretization for smooth solutions without assuming a bound on a high order norm of the solution. For that reason, we introduced the theoretical computational domain $\widetilde{\Omega}$ and fix the parameter $\tilde{m}$ in the regularization of the data.


\subsection{Summary of contributions and broader impacts}

The contributions and broader impacts of this paper are therefore
\begin{enumerate}
\item We present a theory which contributes to the resolution of the long standing open problem of convergence of standard finite difference discretizations to viscosity solutions of the Monge-Amp\`ere equation. 
We recall that the notion of viscosity and Aleksandrov solutions are equivalent for $f >0$ and continuous on $\tir{\Omega}$. 
\item The introduction and proof of convergence of a highly accurate structure preserving finite difference discretization of the Monge-Amp\`ere equation which is of standard type.
\item A proof of an asymptotic convergence rate for a standard compatible finite difference discretization is given in the case of smooth solutions.
\item A ''canonical proof'' is given in the sense that the same principles may be applied to other type of discretizations. It does not seem possible to adapt the non standard discretizations to the finite element context.
\item A convergence proof is given for a time marching algorithm which can be used to solve the discrete problem. It consists of a Laplacian preconditioner of a simple gradient algorithm. 
\item The proof of convergence to the Aleksandrov solutions can be adapted to a wide range of methods once it is understood how these methods solve the discrete problems for smooth solutions. In particular it follows from the general approach taken in this paper that the monotone schemes introduced in \cite{Oberman2010a,Froese13} converge to both viscosity solutions and Aleksandrov solutions. 
For right hand sides which approximate a combination of Dirac masses, a very good initial guess is necessary for these methods.
Nethertheless the result is important for optimal transportation problems where one has to extend the data, resulting in a discontinuous right hand side $f$. In these cases the continuous viscosity solution approach is no longer valid.

\item The 
new point of view of this paper 
places the study of standard finite difference discretizations solely in the framework of traditional numerical analysis. That is, the main task is to understand how schemes perform for smooth solutions and whether they give numerical evidence of convergence for non smooth solutions. 
The convergence of the discretization for non smooth solutions follows from the general approach taken in this paper. This paper thus provides a blueprint which can be used to analyze the discretizations  proposed in \cite{Dean2004,Mohammadi2007}. 
\end{enumerate}
 
\subsection{Organization of the paper}
We organize the paper as follows. In the next section we first introduce standard discretizations of the Monge-Amp\`ere equation and recall the convergence result of \cite{Awanou-k-Hessian12} for smooth solutions and the second order accurate central discretization. In section \ref{gen-frame} we recall  key results on the Aleksandrov theory of the Monge-Amp\`ere equation and give a general framework of convergence of standard discretizations to the Aleksandrov solution. Additional notation and preliminaries are given in section \ref{notation+}.
In section  \ref{time1} we prove the convergence of the compatible discretization for both smooth and non smooth solutions. 
The last section is devoted to a discussion of numerical results for the two dimensional problem using  the standard compatible and central discretizations. The proof of some of the results is given in an appendix. 

\section{Standard finite difference discretizations of the Monge-Amp\`ere equation} \label{std-common}

We recall that $\Omega$ is a bounded convex domain of $\R^d$. For $0<  h < 1$, we define
\begin{align*}
\mathbb{Z}_h & = \{x=(x_1,\ldots,x_d)^T \in \R^d: x_i/h \in \mathbb{Z} \}\\
 \Omega_h & = \tir{\Omega} \cap  \mathbb{Z}_h.
\end{align*}
Let $\mathcal{M}(\Omega_h)$ denote the space of grid functions, i.e. mappings from $\Omega_h$ to $\R$.
We denote by $e_i, i=1,\ldots,d$ the $i$-th unit vector of $\R^d$ and consider first order difference operators defined on $\mathbb{Z}_h$ by 
\begin{align*}
\partial^i_{+} v_h(x) & \coloneqq \frac{ v_h(x+he_i)-v_h(x) } {h}, & 
\partial^i_{-} v_h(x) & \coloneqq \frac{ v_h(x)-v_h(x-he_i) } {h}. 
\end{align*}
We have for $x \in \mathbb{Z}_h$
\begin{align}
\partial^j_{-} \partial^i_{+} v_h(x) & = \frac{v_h(x+he_i)-v_h(x)-v_h(x+he_i-h e_j)+v_h(x-he_j)}{h^2}. \label{second-disc1}
\end{align}
We will also need the central second order accurate first order operator defined for $i=1,\ldots,d$ by
$$
\partial^i_h v_h(x)  \coloneqq \frac{ v_h(x+he_i)-v_h(x-he_i) }{2 h}.
$$

We use the notation $A=(a_{i j})_{i,j=1,\ldots,d}$ to denote the matrix $A$ with entries $a_{ij}$.
Several discrete analogues of the Hessian $D^2 v$ of a $C^2$ function $v$ can be defined for $x \in \mathbb{Z}_h$ and a grid function $v_h$. One possibility is to define the discrete Hessian as the non symmetric matrix field $\mathcal{H}_d(v_h)$ with components
\begin{align*}
(\mathcal{H}_d(v_h)(x))_{i j}  & =\partial^j_{-} \partial^i_{+} v_h(x), i,j=1,\ldots,d.
\end{align*}
\begin{defn}
A $d \times d$ matrix $A$ is said to be positive definite if and only if $z^T A z >0$ for $z \in \R^d, z \neq 0$. The matrix $A$ is said to be positive if and only if $z^T A z \geq 0$ for $z \in \R^d$.
\end{defn}
Decomposing a matrix $A$ into its symmetric and skew symmetric part, i.e.
$
A = (A + A^T)/2 +  (A - A^T)/2,
$
one concludes that $A$ is (positive) definite if and only if its symmetric part is (positive) definite. We will use the notation $\sym A$ to denote the symmetric part of $A$.

Another discretization of the Hessian matrix which has been used in previous work \cite{Benamou2010,Headrick05,Chen2010b,Chen2010c} is to consider for a grid function $v_h$, the matrix field $ \tir{\mathcal{H}}_d(v_h)$  with components
\begin{align*}
(\tir{\mathcal{H}}_d(v_h)(x))_{i i}  &= \partial^i_{+} \partial^i_{-} v_h(x), i,j=1,\ldots,d \\
(\tir{\mathcal{H}}_d(v_h)(x))_{i j}  & = \partial^i_{h} \partial^j_{h} v_h(x), i,j=1,\ldots,d, i \neq j.
\end{align*}

We denote by $\Omega_h^0$ the subset of $\Omega_h$ consisting of grid points $x$ for which  $x \pm h e_i \pm h e_j \in \tir{\Omega}$ for $i,j=1,\ldots,d$ 
and put
$\partial \Omega_h = \Omega_h \setminus \Omega_h^0$. 

For the study of the convergence of numerical methods for non smooth solutions, we will consider the set of interior mesh points
$$
\Omega_h^{00} = \{ \, x \in \widetilde{\Omega} \cap \mathbb{Z}_h, x \pm  2 h e_i \pm 2 h e_j \in \tir{\Omega}, \  \text{for} \ i,j=1,\ldots,d \, \},
$$
and define $\partial \Omega_h^0 = (\tir{\widetilde{\Omega}} \cap \mathbb{Z}_h) \setminus \Omega_h^{00}$. 


The restriction map is defined as a mapping
$$
r_h: C(\Omega) \to \mathcal{M}(\Omega_h), r_h (v) (x) = v(x), x \in \Omega_h,
$$
and is extended canonically to vector fields and matrix fields. 
The restriction to a subset of $\tir{\Omega}$ is defined analogously.

For a vector valued grid function $v_h$ with components $v_{h,i}, i=1,\ldots,d$, the divergence of $v_h$ is defined as the grid function $\div_h v_h = \sum_{i=1}^d \partial^i_- v_{h,i}$. The operator $\div_h$ is extended to matrix fields by taking the divergence of each row. 

We define two discrete versions of the gradient: $D_h v_h$ and $\tir{D}_h v_h$ as: 
\begin{align*}
D_h v_h & : = (\partial^i_+ v_h)_{i=1,\ldots,d}, &
\tir{D}_h v_h & : = (\partial^i_- v_h)_{i=1,\ldots,d}.
\end{align*}

If $v_h=(v_{h,i})_{i=1,\ldots,d}$ is a vector field, we define $\tir{D}_h v_h$ as the matrix field obtained by applying $\tir{D}_h$ to each row, i.e.
$
\tir{D}_h v_h = (\partial^j_- v_{h,i})_{i, j=1,\ldots,d}. 
$
Thus for a scalar field $v_h$
$
\tir{D}_h D_h v_h = \mathcal{H}_d(v_h).
$
The discrete Laplacian $\Delta_h$ is defined as
$
\Delta_h v_h : =  \sum_{i=1}^d \partial^i_+\partial^i_- v_h.
$
With the above definitions, we  have 
$
\div_h D_h v_h = \Delta_h v_h.
$
We first consider two standard discretizations of \eqref{m1}
\begin{align} \label{m1h-alt}
\det  \tir{\mathcal{H}}_d(u_{h}) =r_h(f) \, \text{in} \, \Omega_h^0, u_{h} =r_h(g) \, \text{on} \, \partial \Omega_h,
\end{align}
and
\begin{align} \label{m1h}
\frac{1}{d} \div_h [ (\cof \sym \mathcal{H}_d u_{h} ) D_h u_{h} ]  =r_h(f) \, \text{in} \, \Omega_h^0, u_{h} =r_h(g) \, \text{on} \, \partial \Omega_h.
\end{align}
The latter will be seen as a standard compatible discretization in the sense that essential features of the differential operators at the continuous level are reproduced at the discrete level.

We denote by $\lambda_1(A)$ and $\lambda_d(A)$ the smallest and largest eigenvalues of a symmetric matrix $A$ respectively. 
The discrete analogue of the maximum norm is given 
by \eqref{max-1st} 
We have under smoothness assumptions of the solution $u$ of \eqref{m1}

\begin{thm}[\cite{Awanou-k-Hessian12}] \label{first-cvg}
Problem \eqref{m1h-alt} has a unique local solution $u_h$ with $\lambda_1( \tir{\mathcal{H}}_d (u_h) ) \geq c >0$ for a constant $c$ independent of $h$ and on each compact subset $K$ of $\Omega$
$$
\max_{x \in K} |u_h-r_h u| \leq C h^2,
$$
with a constant $C$ which can be taken as a multiple of $||u||_{C^5(\Omega)}$.
Thus $u_h$ converges uniformly on compact subsets of 
$\Omega$ to the unique smooth convex solution of \eqref{m1}.
\end{thm}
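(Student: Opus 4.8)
\noindent The plan is to run the classical consistency--stability argument for the nonlinear discrete map $F_h(v_h) := \det \tir{\mathcal{H}}_d(v_h) - r_h f$, regarded as a map on the finite--dimensional space of grid functions on $\Omega_h^0$ whose values on $\partial \Omega_h$ are pinned to $r_h g$, and to produce $u_h$ as a small perturbation of $r_h u$. \emph{Step 1 (consistency).} Since $u \in C^5(\Omega)$ and each entry of $\tir{\mathcal{H}}_d$ is assembled from the second order accurate difference quotients $\partial^i_+\partial^i_-$ and $\partial^i_h\partial^j_h$ (the latter commuting, so that $\tir{\mathcal{H}}_d(v_h)$ is a \emph{symmetric} matrix field), a Taylor expansion gives, on every compact $K \subset \Omega$ and every $x \in K \cap \mathbb{Z}_h$ with $h$ small enough, $(\tir{\mathcal{H}}_d(r_h u)(x))_{ij} = (D^2 u(x))_{ij} + O(h^2)$, with remainder controlled by $\|u\|_{C^5(\Omega)}$. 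As $A \mapsto \det A$ is smooth and locally Lipschitz, this yields the truncation bound $\|F_h(r_h u)\|_{\infty,K} \le C h^2 \|u\|_{C^5(\Omega)}$.

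\emph{Step 2 (uniform ellipticity of the linearization).} From $\partial(\det A) = \cof A$ and the chain rule, the Fr\'echet derivative of $F_h$ at $v_h$ is the second order finite difference operator in non--divergence form $w_h \mapsto \cof(\tir{\mathcal{H}}_d(v_h)) : \tir{\mathcal{H}}_d(w_h)$, with coefficient matrix $a_h := \cof(\tir{\mathcal{H}}_d(v_h))$. For $v_h = r_h u$, Step 1 combined with the continuity of the eigenvalues of a symmetric matrix as functions of its entries gives $\lambda_1(\tir{\mathcal{H}}_d(r_h u)(x)) \ge \frac{1}{2}\min_{K}\lambda_1(D^2 u) > 0$ for $x \in K$ and $h$ small; hence $a_h$ is symmetric positive definite with eigenvalues bounded below and above, uniformly in $h$ on $K$. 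By the same continuity argument these bounds persist for all $v_h$ in an $h$--independent ball $B$ around $r_h u$.

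\emph{Step 3 (uniform interior stability of the linearization).} The crux is an a priori estimate $\|w_h\|_{\infty,K} \le C\big( \| \cof(\tir{\mathcal{H}}_d(v_h)) : \tir{\mathcal{H}}_d(w_h)\|_{\infty} + \|w_h\|_{\infty,\partial \Omega_h} \big)$ with $C$ independent of $h$, valid for all $v_h \in B$. This is the main obstacle: the off--diagonal central differences in $\tir{\mathcal{H}}_d$ produce negative off--site coefficients, so the operator is not monotone and no discrete maximum principle applies directly. The route I would take is to exploit the regularity of the coefficients $a_h$ (as smooth as $D^2 u$), freeze them at each grid point and compare with constant--coefficient uniformly elliptic discrete operators for which a discrete barrier/Schauder--type bound is available, and localize to $K$ with a cutoff; the compatible divergence form \eqref{m1h}, whose linearization agrees with this one to leading order, can be brought in to supply the accompanying discrete energy control.

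\emph{Step 4 (existence, local uniqueness, error estimate).} With the truncation estimate of Step 1 and the uniform invertibility of $DF_h$ on $B$ from Steps 2--3, a Newton--Kantorovich / contraction--mapping argument on the ball of radius $\sim h^2\|u\|_{C^5(\Omega)}$ about $r_h u$ yields a unique $u_h$ in that ball with $F_h(u_h)=0$, and the fixed--point estimate gives $\max_{x\in K}|u_h - r_h u| \le C h^2$ with $C$ a multiple of $\|u\|_{C^5(\Omega)}$. Since $u_h$ lies in $B$, Step 2 gives $\lambda_1(\tir{\mathcal{H}}_d(u_h)) \ge c > 0$ uniformly in $h$, so $u_h$ is discrete convex and is the unique local solution. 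As $K$ is an arbitrary compact subset, $u_h$ converges uniformly on compact subsets of $\Omega$ to $u$, the unique smooth convex solution of \eqref{m1} (uniqueness from the Aleksandrov/viscosity theory). The hard part, as indicated, is the interior discrete stability estimate of Step 3 for this non--monotone scheme.
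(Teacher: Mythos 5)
There is a genuine gap, and it sits exactly where you flag it: Step 3 is not a proof but a wish list. The scheme \eqref{m1h-alt} is not monotone (the central off-diagonal differences create negative off-site coefficients, as you note), so no discrete maximum principle is available, and the frozen-coefficient/discrete-barrier/discrete-Schauder route you sketch is not an off-the-shelf tool for this operator; indeed this paper states explicitly that the lack of a maximum principle is the obstruction to proving max-norm stability without extra assumptions. Since your Newton--Kantorovich argument in Step 4 needs precisely the uniform-in-$h$ invertibility of the linearization in the maximum norm on a ball around $r_h u$, the existence, local uniqueness and the $O(h^2)$ bound all rest on an estimate you have not established. There is also a secondary issue in Step 4: to keep the uniform lower bound $\lambda_1(\tir{\mathcal{H}}_d(v_h))\ge c$ inside your ball you must control \emph{second} difference quotients of $v_h-r_h u$, and a max-norm ball of radius $\sim h^2$ only gives an $O(1)$ (not small) bound on them after dividing by $h^2$, so discrete convexity of the iterates is not automatic without a more careful choice of norm and radius.

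The argument actually used (in the cited reference, and mirrored in Section \ref{time1} of this paper for the compatible scheme \eqref{m1h}) avoids max-norm stability of the linearization altogether. One defines a Laplacian-preconditioned fixed point map $T_h$ (solve a discrete Poisson problem at each step, as in \eqref{time-marching}--\eqref{time-marching-0}), and proves it is a strict contraction in the discrete $H^1$ seminorm on a ball $B_\rho(r_h u)$ whose radius $\rho=O(h^{1+d/2})$ (or $O(h^{2+d/2})$) is tuned to the inverse estimate \eqref{inverse-C}, so that membership in the ball already forces, via the eigenvalue-continuity Lemmas \ref{cone}, \ref{cone2} and \ref{lem-1}, uniform two-sided bounds on $\lambda_1$, $\lambda_d$ of the discrete Hessian and of its cofactor. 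The contraction constant is made strictly less than one by rescaling the equation ($\alpha=h^{(3+d/2)/(d-1)}$, cf.\ Lemma \ref{con-lem}), the stability input being purely the energy identity \eqref{weight} and positive definiteness of the cofactor matrix, i.e.\ \eqref{ess-step}, not any discrete elliptic regularity. The Banach fixed point theorem then gives existence, local uniqueness and $|u_h-r_h u|_{1,h}\le\rho$, and the stated $O(h^2)$ maximum-norm rate on compact sets is recovered only at the end through the inverse inequality \eqref{inverse0-inf-0} and the discrete Poincar\'e inequality, as in Remark \ref{quad-cvg}, with interior Schauder estimates supplying the $h$-independent bounds on $u$ on compact subsets. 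So while your Steps 1--2 (consistency, positivity of the linearized coefficients) match the paper's ingredients, the decisive stability mechanism you would need is different from, and weaker than, what your outline assumes you can get.
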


The proof of the following result is given in section \ref{time1}. 

\begin{prop} \label{second-cvg} Problem \eqref{m1h} has a unique local solution $u_h$ with $\lambda_1(\mathcal{H}_d (u_h))\geq c >0$ for a constant $c$ independent of $h$ and
$$
|u_h-r_h u|_{0,\infty,h} \leq C h^2,
$$
with a constant $C$ which can be taken as a multiple of $||u||_{C^2(\Omega)}$.
Thus $u_h$ converges uniformly on 
$\Omega$ to the unique smooth convex solution of \eqref{m1}.

\end{prop}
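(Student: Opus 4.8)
The proof will follow the pattern of the compatible finite element arguments of \cite{Awanou-Std01}: establish consistency and stability of the linearization at $r_h u$, then close a fixed point iteration in a ball of radius $O(h^2)$ about $r_h u$. The structural fact that makes \eqref{m1h} a \emph{compatible} discretization is the continuous identity
\[
\det D^2 u = \frac{1}{d}\,\div\!\big[(\cof D^2 u)\,Du\big],
\]
which follows from the null-Lagrangian (Piola) identity $\div(\cof D^2 u)=0$ together with $(\cof A):A = d\det A$; the difference operator in \eqref{m1h} is assembled so that, applied to a smooth function, its left-hand side reproduces $\det D^2(\cdot)$ to second order and so that a discrete analogue of $\div(\cof D^2 v)=0$ holds, at least to the order needed below. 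Accordingly I would introduce the map
\[
\Phi_h(v_h)\coloneqq\frac{1}{d}\,\div_h\!\big[(\cof\sym\mathcal{H}_d v_h)\,D_h v_h\big]-r_h f ,
\]
on grid functions taking the prescribed values on $\partial\Omega_h$, so that \eqref{m1h} is the equation $\Phi_h(u_h)=0$ on $\Omega_h^0$.

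First I would prove \emph{consistency}, $|\Phi_h(r_h u)|_{0,\infty,h}\le C h^2$ with $C$ of the form asserted, by Taylor-expanding the first-order difference operators and using $\det D^2 u = f$ together with the identity above so that the leading errors cancel. The heart of the matter is \emph{stability of the linearization}. Differentiating, $D\Phi_h[v_h]$ is a difference operator in divergence form whose principal part is $w\mapsto\frac1d\div_h[(\cof\sym\mathcal{H}_d v_h)D_h w]$; the discrete compatibility is exactly what keeps the remaining contributions of lower order, mirroring the fact that the Fr\'echet derivative of $v\mapsto\det D^2 v$ at $u$ is the linear elliptic operator $w\mapsto(\cof D^2 u):D^2 w$, with no zeroth- or first-order terms. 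Because $u$ is smooth and uniformly convex with $\det D^2 u = f\ge c_0>0$, the eigenvalues of $D^2 u$ are bounded below by a positive constant depending on $\|u\|_{C^2(\Omega)}$; hence, by consistency and the continuity of the eigenvalues of a matrix as a function of its entries, $\sym\mathcal{H}_d(r_h u)$ — and therefore its cofactor matrix — is uniformly positive definite for $h$ small, and the same holds for every $v_h$ in a fixed neighbourhood of $r_h u$. This uniform ellipticity, combined with the divergence structure (summation by parts) and the smoothness of the coefficients, should yield a uniform-in-$h$ bound $\|D\Phi_h[v_h]^{-1}\|\le C$ in a norm strong enough to control the discrete Hessian — a discrete Schauder/$C^2_h$-type estimate.

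With consistency and uniform stability in hand, a local Lipschitz bound on $v_h\mapsto D\Phi_h[v_h]$ lets a Newton--Kantorovich (equivalently Banach fixed point) argument produce, for $h$ small enough, a unique zero $u_h$ of $\Phi_h$ in the ball of radius $O(h^2)$ about $r_h u$; this is the asserted unique local solution and it satisfies $|u_h-r_h u|_{0,\infty,h}\le Ch^2$. Since $u_h$ remains in the neighbourhood on which $\sym\mathcal{H}_d(\cdot)$ is uniformly positive definite, the continuity of eigenvalues also gives $\lambda_1(\mathcal{H}_d(u_h))\ge c>0$ with $c$ independent of $h$, and convergence of $u_h$ to the unique smooth convex solution on $\Omega$ follows from the error bound.

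I expect the real obstacle to be the stability step. Measured in the discrete maximum norm the map $v_h\mapsto\mathcal{H}_d v_h$ is only Lipschitz with constant of order $h^{-2}$, so the Newton--Kantorovich smallness condition fails unless the linearization and the residual are controlled in a norm that sees the discrete second differences, and one then needs a uniform-in-$h$ bound for $D\Phi_h[v_h]^{-1}$ in such a norm \emph{without} a discrete maximum principle. This is available here precisely because $u$ is smooth, so the coefficient matrix of the linearization is smooth and, by the eigenvalue argument, uniformly elliptic; rescaling the equation via $u\mapsto\lambda u$, $f\mapsto\lambda^d f$ normalizes this ellipticity constant (and, for the companion time-marching algorithm, keeps the admissible discrete solutions well separated). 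For non-smooth solutions this route is unavailable, which is why the theoretical subdomain $\widetilde{\Omega}$ and the fixed regularization parameter are introduced elsewhere in the paper.
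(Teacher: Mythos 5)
Your outline (consistency plus stability plus a fixed point in a small ball around $r_h u$, with eigenvalue continuity giving uniform discrete convexity) has the right shape, but two of its load-bearing steps are not what the paper does and, as stated, do not go through. First, the consistency claim $|\Phi_h(r_h u)|_{0,\infty,h}\le Ch^2$ is not available for this operator: the compatible discretization is built from one-sided differences ($\div_h=\sum\partial_-^i$, $D_h=(\partial_+^i)$, $\mathcal{H}_d=\tir{D}_hD_h$), and Lemma \ref{det-ap-lem} proves only \emph{first-order} consistency, $O(h)$. The leading errors do not cancel. The paper nonetheless recovers the $O(h^2)$ maximum-norm rate, but by a different route: the fixed point is located in a ball of radius $\rho=O(h^{2+d/2})$ in the $|\cdot|_{1,h}$ seminorm (Remark \ref{rho23}), and the max-norm bound follows from the inverse estimate $|v_h|_{0,\infty,h}\le h^{-d/2}\|v_h\|_{0,h}$ and the discrete Poincar\'e inequality (Remark \ref{quad-cvg}), not from second-order consistency.

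Second, the stability step you flag as "the real obstacle" is exactly the estimate the paper says is unavailable: there is no discrete maximum principle for these non-monotone stencils, so a uniform-in-$h$ bound on $D\Phi_h[v_h]^{-1}$ in a norm controlling discrete second differences (a discrete Schauder estimate) cannot be invoked, and writing that uniform ellipticity "should yield" it leaves the proof open at its hardest point. The paper circumvents this entirely: it never inverts the linearization. Instead it defines $T_h$ through a \emph{fixed} preconditioner $-\nu\Delta_h$ with $\nu=(r'+R')/(2d)$ determined by the spectral bounds $r'\le\lambda(\cof\sym\mathcal{H}_dv_h)\le R'$ (Lemmas \ref{cone}, \ref{cone2}, \ref{lem-1}), obtains a contraction constant $\beta<1$ by an energy argument in $|\cdot|_{1,h}$ (summation by parts, \eqref{weight}, \eqref{ess-step}), and absorbs both the inverse-estimate loss $h^{-d/2-1}$ in the Lipschitz term and the merely first-order residual $C_1\alpha^dh$ by rescaling with $\alpha=h^{(3+d/2)/(d-1)}$, so that $T_h$ maps $\alpha B_\rho(r_hu)$ into itself (Lemmas \ref{mov-ball}, \ref{con-lem}, \ref{errorest0}). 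You mention rescaling only as a normalization of the ellipticity constant; in the actual proof it is the device that closes the argument. Without it, a Newton--Kantorovich iteration in a ball of radius $O(h^2)$ in the max norm, as you propose, has neither the consistency order nor the stability estimate it needs.
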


\begin{defn} \label{def-disc-conv}
A mesh function $v_h$ is said to be discrete convex if $\mathcal{H}_d(v_h)(x)$ (alternatively  $\tir{\mathcal{H}}_d(v_h)(x)$) is a positive matrix for all $x \in \Omega_h^0$. The function $v_h$ is said to be discrete strictly convex if 
$\mathcal{H}_d(v_h)(x)$ (alternatively  $\tir{\mathcal{H}}_d(v_h)(x)$) is a positive definite matrix for all $x \in \Omega_h^0$.
\end{defn}


In \cite{Awanou-k-Hessian12} we proved, for smooth solutions, the local solvability of \eqref{m1h-alt}. In section \ref{time1} we prove the local solvability of \eqref{m1h} for smooth solutions.
We analyze in this paper the convergence of time marching methods for solving respectively \eqref{m1h-alt} and \eqref{m1h} under a smoothness assumption on the solution $u$ of \eqref{m1}. 
They are given by
\begin{align} \label{time-marching}
\begin{split}
- \tir{\nu} \Delta_h u_{h }^{k+1} & = -  \tir{\nu} \Delta_h u_{h }^k + \det \tir{ \mathcal{H}}_d(u_{h}^k) - r_h(f) \, \text{in} \, \Omega_h^0 \\
  u_{h }^{k+1} & = r_h(g) \, \text{on} \, \partial \Omega_h,
\end{split}
\end{align}
and
\begin{align} 
\begin{split}  \label{time-marching3}
-  \tir{\nu} \Delta_h u_{h }^{k+1} & = -  \tir{\nu} \Delta_h u_{h }^k + \frac{1}{d} \div_h [ (\cof \sym \mathcal{H}_d u_{h}^k ) D_h u_{h}^k ]  - r_h(f) \, \text{in} \, \Omega_h^0 \\
  u_{h }^{k+1} & = r_h(g) \, \text{on} \, \partial \Omega_h,
\end{split}
\end{align}
for $ \tir{\nu} >0$ sufficiently large and an initial guess $u_{h}^0$.

\begin{rem} \label{subh-rem}
If one takes $\tir{\nu}$ large in \eqref{time-marching} and \eqref{time-marching3}, one gets that the left hand sides are negative, i.e. discrete subharmonicity is preserved in the iterations.
\end{rem}

For the situation where \eqref{m1} does not have a smooth solution, we consider the related problems
\begin{align} \label{m1h-altX}
\det  \tir{\mathcal{H}}_d(u_{h}) =r_h(f_{\tilde{m}}) \, \text{in} \, \Omega_h^{00}, u_{h} =r_h(u_{\tilde{m}}) \, \text{on} \, \partial \Omega_h^0,
\end{align}
and
\begin{align} \label{m1hX}
\frac{1}{d} \div_h [ (\cof \sym \mathcal{H}_d u_{h} ) D_h u_{h} ]  =r_h(f_{\tilde{m}}) \, \text{in} \, \Omega_h^{00}, u_{h} =r_h(u_{\tilde{m}}) \, \text{on} \, \partial \Omega_h^0,
\end{align}
with corresponding time marching methods
\begin{align} \label{time-marchingX}
\begin{split}
- \tir{\nu} \Delta_h u_{h }^{k+1} & = -  \tir{\nu} \Delta_h u_{h }^k + \det \tir{ \mathcal{H}}_d(u_{h}^k) - r_h(f_{\tilde{m}}) \, \text{in} \, \Omega_h^{00} \\
  u_{h }^{k+1} & = r_h(u_{\tilde{m}}) \, \text{on} \, \partial \Omega_h^0,
\end{split}
\end{align}
and
\begin{align} 
\begin{split}  \label{time-marching3X}
-  \tir{\nu} \Delta_h u_{h }^{k+1} & = -  \tir{\nu} \Delta_h u_{h }^k + \frac{1}{d} \div_h [ (\cof \sym \mathcal{H}_d u_{h}^k ) D_h u_{h}^k ]  - r_h(f_{\tilde{m}}) \, \text{in} \, \Omega_h^{00} \\
  u_{h }^{k+1} & = r_h(u_{\tilde{m}}) \, \text{on} \, \partial \Omega_h^0.
\end{split}
\end{align}

We recall that the parameter $\tilde{m}$ was defined in section \ref{methodns}. 
Intuitively Problems \eqref{m1h-altX} and \eqref{m1hX} discretize the Monge-Amp\`ere equation in the interior of the domain where the non smooth solution can be approximated by smooth functions which solve related Monge-Amp\`ere equations. It is clear that since \eqref{m1h-altX} and \eqref{m1hX} are very close to \eqref{m1h-alt} and \eqref{m1h}, and with the choice of the small parameter $\delta$ introduced in section \ref{methodns}, numerical experiments with the latter would indicate convergence for non smooth solutions. 

The following lemma is essential to our methodology

\begin{lemma} \label{essential}
A sequence of (discrete) convex functions which is locally uniformly bounded has a subsequence which converges uniformly on compact subsets to a (discrete) convex function.

\end{lemma}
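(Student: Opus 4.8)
The statement naturally splits into two parallel cases — the continuous case (convex functions on a convex domain) and the discrete case (discrete convex mesh functions) — but the structure of the argument is the same, so the plan is to handle both simultaneously, flagging the discrete adaptations where needed. First I would recall the classical fact that a convex function on a convex open set is locally Lipschitz, with the local Lipschitz constant on a compact set $K \Subset \Omega$ controlled by the diameter of a slightly larger compact set $K'$ with $K \Subset K' \Subset \Omega$ and the oscillation of the function on $K'$. Combined with the hypothesis of local uniform boundedness, this yields a local uniform Lipschitz bound, hence local equicontinuity, for the sequence. An application of the Arzelà–Ascoli theorem together with a standard diagonal argument over an exhaustion of $\Omega$ by compact sets then produces a subsequence converging uniformly on compact subsets to some function. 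Finally, one checks that uniform (indeed pointwise) limits of convex functions are convex — immediate from passing to the limit in the inequality $u(\lambda x + (1-\lambda) y) \le \lambda u(x) + (1-\lambda) u(y)$.

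For the discrete case the only point requiring care is the replacement of the "convex functions are locally Lipschitz" step. Here I would argue that a discrete convex mesh function $v_h$ on $\Omega_h^0$, i.e.\ one with $\mathcal{H}_d(v_h)(x)$ a positive matrix at every interior mesh point, satisfies a discrete mean value / comparison property: along any coordinate direction the one-dimensional restriction is discretely convex (its second difference $\partial^i_+\partial^i_- v_h \ge 0$, a diagonal entry of $\mathcal{H}_d(v_h)$), so $v_h$ attains its maximum on $\partial\Omega_h$ and more generally the slopes of the divided differences are monotone. From this and the local uniform bound one extracts a bound on the first differences $\partial^i_\pm v_h$ on interior mesh points of any compact $K$, uniform in $h$, which is the discrete analogue of the local Lipschitz bound. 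One then either invokes a discrete Arzelà–Ascoli statement or, more cleanly, extends each $v_h$ to a piecewise-affine (or multilinear) function on $\tir\Omega$, observes that these extensions are genuinely convex (or at least uniformly Lipschitz with the same constant), applies the continuous argument to the extensions, and restricts the limit back to the lattice; the limit inherits discrete convexity because the second differences are limits of nonnegative quantities.

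The main obstacle I expect is the discrete local Lipschitz estimate — making precise, with constants independent of $h$, that discrete convexity plus a local sup bound forces control of the first differences on compact subsets. The subtlety is that the boundary $\partial\Omega_h^0$ of the discrete interior where convexity is assumed sits at distance $O(h)$ from the true boundary, and one wants the estimate on a fixed compact $K$ with a buffer $K'$ of positive (not $O(h)$) width, so for $h$ small enough there is room to run the monotonicity-of-slopes argument entirely within the region where $\mathcal{H}_d(v_h) \ge 0$ holds; once that geometric bookkeeping is set up the estimate itself is the standard convexity slope bound. Everything else — Arzelà–Ascoli, the diagonal argument, closedness of convexity under uniform limits — is routine and will be stated rather than belabored.
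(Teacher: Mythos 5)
Your proposal is correct and follows essentially the same route as the paper: in both the continuous and discrete cases one derives local equicontinuity from (discrete) convexity plus the local uniform bound and then applies Arzel\`a--Ascoli, the only difference being that the paper outsources the two equicontinuity estimates to \cite[Lemma 3.2.1]{Guti'errez2001} and \cite[Corollary 4.8]{Aguilera2008} (and disposes of the fixed-$h$ case by Bolzano--Weierstrass), whereas you sketch the slope-monotonicity arguments directly. The only step worth tightening is the convexity of the limit when $h_l\to 0$: nonnegativity of the coordinate second differences alone gives only separate convexity, so one should pass to the limit in the full quadratic form $z^T\mathcal{H}_d(v_{h_l})z\geq 0$ (e.g.\ by summation by parts against nonnegative test functions) to conclude $D^2v\geq 0$ distributionally.
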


\begin{proof}
We consider separately the cases of a sequence $u_m$ of convex functions, a sequence $(u_{m h})_m$ of discrete convex functions and a sequence $u_{h_l}$ of discrete convex functions. 

A sequence $u_m$ of convex functions is locally equicontinuous by  \cite[Lemma 3.2.1]{Guti'errez2001}, c.f. \cite{Awanou-Std04} for details. If the sequence is also locally uniformly bounded, the result follows from the Arzela-Ascoli theorem \cite[p. 179]{Royden}.

If we consider a sequence $(u_{m h})_m$ of discrete convex functions, for fixed $h$ the number of grid points is finite and the result follows from the Bolzano-Weierstrass theorem.

If the sequence $u_{h_l}$ is a sequence of discrete convex mesh functions in the sense that $\tir{\mathcal{H}}_d(u_{h_l})(x)$ is a positive matrix for all $x \in \Omega_h^0$, the result is given by \cite[Corollary 4.8]{Aguilera2008}
and the Arzela-Ascoli theorem (which requires only local uniform boundedness). Since  $\mathcal{H}_d(u_{h_l})(x)$ and  $\tir{\mathcal{H}}_d(u_{h_l})(x)$ have the same diagonal elements, the discrete analogue of local equicontinuity \cite[(2.2) and p. 22]{Aguilera2008} also holds when one requires that $\mathcal{H}_d(u_{h_l})(x)$ is a positive matrix for all $x \in \Omega_h^0$, that is the result also holds in that case.


\end{proof}



We make the usual abuse of notation of denoting by $C$ a generic constant which does not depend on $h$. 

\section{General framework for convergence of standard discretizations to the Aleksandrov solution}  \label{gen-frame}

\subsection{The Aleksandrov solution} \label{Aleks-def}
Let $K(\Omega)$ denote the cone of convex functions on $\Omega$ and let us denote by $B(\Omega)$ the space of Borel measures on $\Omega$. We define the mapping
\begin{align*}
M: C^2(\Omega) \cap K(\Omega) \to B(\Omega), M[v](B) = \int_B \det D^2 v(x) \ud x,
\end{align*}
where $B$ is a Borel set.

The topology on $K(\Omega)$ is the topology of compact convergence, i.e. for $v_m, v \in K(\Omega)$, $v_m$ converges to $v$ if and only if $v_m$ converges to $v$ uniformly on compact subsets of $\Omega$. The topology on 
$B(\Omega)$ is induced by the weak convergence of measures.

\begin{defn} A sequence $\mu_m$ of Borel measures converges weakly to a Borel measure $\mu$ if and only if
$$
\int_{\Omega} p(x)\ud \mu_m  \to \int_{\Omega} p(x) \ud \mu,
$$
for every continuous function $p$ with compact support in $\Omega$.
\end{defn}
If the measures $\mu_m$ have density $a_m$, and $\mu$ has density $a$, we have
\begin{defn}
Let $a_m, a \geq 0$. The sequence $a_m$ converges weakly to  $a$ as measures if and only if 
$$\int_{\Omega }a_m p \ud x \to \int_{\Omega } a p \ud x, $$ 
for all continuous functions $p$ with compact support in $\Omega$. 
\end{defn}

The mapping $M$ extends uniquely to a continuous operator on $K(\Omega)$,  \cite[Proposition 3.1]{Rauch77}. This notion of Monge-Amp\`ere measure can be shown to be equivalent to the one used in \cite{Guti'errez2001,Hartenstine2006}. The proof is given by  \cite[Proposition 3.4]{Rauch77}. 
We have

\begin{lemma}[Lemma 1.2.3 \cite{Guti'errez2001}] \label{weak-s} Let $v_m$ be a sequence of convex functions in $\Omega$ such that $v_m \to v$ uniformly on compact subsets of $\Omega$. Then the associated Monge-Amp\`ere measures $M [v_m]$ tend to $M[v]$ weakly. 
\end{lemma}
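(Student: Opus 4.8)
The plan is to first record the easy structural point: since each $v_m$ is convex and $v_m\to v$ locally uniformly, the limit $v$ is convex, hence $v\in K(\Omega)$, and the hypothesis is \emph{exactly} the statement that $v_m\to v$ in the topology of $K(\Omega)$. Given this, there is a one-line proof: the operator $M$ was recalled above to extend uniquely to a \emph{continuous} operator $K(\Omega)\to B(\Omega)$, the target carrying the topology of weak convergence of measures, so $M[v_m]\to M[v]$ weakly by continuity. I would mention this, but I regard it as unsatisfying because it hides all the content in the cited extension theorem; so I would also give a direct argument.

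For the direct proof I would work with the equivalent description of the Monge--Amp\`ere measure via the normal mapping, $M[v](E)=|\partial v(E)|$, with $\partial v$ the subdifferential and $|\cdot|$ Lebesgue measure. The steps are: (i) prove the set-theoretic convergence of normal mappings, i.e. for every compact $K\subset\Omega$,
$$
\bigcap_{j\ge 1}\ \tir{\bigcup_{m\ge j}\partial v_m(K)}\ \subset\ \partial v(K),
$$
by a closed-graph argument: if $p_m\in\partial v_m(x_m)$ with $x_m\in K$, $x_m\to x$, $p_m\to p$, then passing to the limit in $v_m(y)\ge v_m(x_m)+p_m\cdot(y-x_m)$ — using the local uniform convergence $v_m\to v$ and the local equicontinuity of the family, as in the proof of Lemma~\ref{essential} — gives $p\in\partial v(x)$; (ii) deduce upper semicontinuity on compacts, $\limsup_m|\partial v_m(K)|\le|\partial v(K)|$, from (i) together with the standard fact that for a convex function the $\partial$-images of two disjoint sets overlap only in a null set, so the set functions $|\partial v_m(\cdot)|$ may be treated additively; (iii) deduce lower semicontinuity on open sets, $\liminf_m|\partial v_m(U)|\ge|\partial v(U)|$, by exhausting $U$ from inside by compacts and applying (i) on a slightly larger compact neighbourhood; (iv) combine (ii) and (iii) with the layer-cake identity
$$
\int_\Omega p\ud\mu=\int_0^\infty \mu(\{p>t\})\ud t ,
$$
valid for $0\le p\in C_c(\Omega)$: Fatou applied to the \emph{open} superlevel sets $\{p>t\}$ yields $\liminf_m\int p\ud M[v_m]\ge\int p\ud M[v]$, while the reverse Fatou lemma applied to the \emph{compact} sets $\{p\ge t\}$ (with $t>0$) yields the matching $\limsup$, the required domination being the uniform bound $|\partial v_m(\operatorname{supp}p)|\le C$ furnished by (ii) with $K=\operatorname{supp}p$. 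A general $p\in C_c(\Omega)$ reduces to the nonnegative case by adding a constant on $\operatorname{supp}p$ and splitting into positive and negative parts.

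I expect step (i) to be the only point with genuine content and hence the main obstacle: it is precisely the assertion that no subgradient ``escapes'' $\partial v(K)$ in the limit, and it is there that convexity and the local equicontinuity of $\{v_m\}$ are actually used. The remaining steps are routine, the one thing to keep in mind being that $\partial v_m$ is set-valued, so the additivity invoked in (ii)--(iii) holds only up to Lebesgue-null sets — a standard property of subdifferentials of convex functions, the shared values being exactly where the Legendre transform fails to be differentiable.
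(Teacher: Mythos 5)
First, a point of comparison: the paper does not prove this lemma at all --- it is quoted verbatim as Lemma 1.2.3 of Guti\'errez's book, and the only justification the paper itself supplies is the earlier remark that $M$ extends uniquely to a continuous operator $K(\Omega)\to B(\Omega)$ (citing Rauch--Taylor). Your ``one-line proof'' is therefore exactly the paper's implicit argument, and it is correct as far as it goes: the locally uniform limit $v$ of convex functions is convex, and continuity on the metrizable space $K(\Omega)$ is sequential continuity. Like the paper, though, this pushes all the content into the cited extension theorem.

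Your direct argument reproduces the skeleton of Guti\'errez's actual proof, and steps (i), (ii) and (iv) are sound: the closed-graph property of the subdifferential under locally uniform convergence, reverse Fatou for sets inside the uniformly bounded set $\bigcup_m\partial v_m(K)$, and the layer-cake reduction of weak convergence to semicontinuity on compact and open level sets. The genuine gap is step (iii). The inclusion in step (i) bounds $\partial v_m(K)$ \emph{from above} by $\partial v(K)$; no amount of exhausting $U$ by compacts and enlarging them converts this into the lower bound $\liminf_m|\partial v_m(U)|\ge|\partial v(U)|$ --- the complement trick $|\partial v_m(U)|\ge|\partial v_m(\overline V)|-|\partial v_m(\overline V\setminus U)|$ merely relocates the needed lower bound to the compact set $\overline V$. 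What is required, and what Guti\'errez proves separately (his Lemma 1.2.2), is the reverse inclusion $\partial v(U)\setminus S\subset\liminf_m\partial v_m(U)$, where $S$ is the Lebesgue-null set of slopes supporting $v$ at more than one point: for $p\in\partial v(x_0)\setminus S$ one minimizes $y\mapsto v_m(y)-p\cdot y$ over a small closed ball $\overline{B_r(x_0)}\subset U$ and uses the strict positivity of $v(y)-v(x_0)-p\cdot(y-x_0)$ on $\partial B_r(x_0)$ (this is precisely where $p\notin S$ enters) to force the minimum into the interior for large $m$, so that $p\in\partial v_m(U)$. Only then does Fatou give the liminf inequality on open sets; without this stability-of-subgradients argument the liminf half of (iv), hence the weak convergence, is unproved.
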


\begin{defn}
A convex function $u \in C(\tir{\Omega})$ is said to be an Aleksandrov solution of \eqref{m1} if $u=g$ on $\partial \Omega$ and $M[u]$ has density $f$.
\end{defn}

We have

\begin{thm} [Theorem 1.1 \cite{Hartenstine2006} ] \label{weak-cont}
Let $\Omega$ be a bounded convex domain of $\R^d$ and assume that $g$ can be extended to a function $\tilde{g} \in C(\tir{\Omega})$ which is  convex in $\Omega$. Then if $f \in L^1(\Omega)$ , \eqref{m1} has a unique convex Aleksandrov solution in $C(\tir{\Omega})$ which assumes the boundary condition in the classical sense.
\end{thm}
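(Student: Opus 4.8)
The plan is to derive uniqueness from the comparison principle for the Monge-Amp\`ere measure, and existence from a regularization-and-compactness argument in which the convex extension $\tilde g$ supplies the boundary barriers; Perron's method, built on the solvability theory for strictly convex domains, would be an equally viable route.

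\emph{Uniqueness.} Recall the comparison principle: if $v,w\in C(\tir\Omega)\cap K(\Omega)$ satisfy $M[v](B)\le M[w](B)$ for every Borel $B\subset\Omega$ and $v\ge w$ on $\partial\Omega$, then $v\ge w$ in $\Omega$ (a consequence of Aleksandrov's maximum principle). If $u_1$ and $u_2$ are Aleksandrov solutions of \eqref{m1}, then $M[u_1]=M[u_2]=f\ud x$ and $u_1=u_2=g$ on $\partial\Omega$, so applying the principle with the roles of $u_1$ and $u_2$ interchanged gives $u_1\equiv u_2$.

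\emph{Existence, interior.} Pick $f_k\in C^\infty(\tir\Omega)$ with $0<c\le f_k$ and $f_k\to f$ in $L^1(\Omega)$, mollify $\tilde g$ into smooth convex functions $g_k\to\tilde g$ uniformly on $\tir\Omega$, and exhaust $\Omega$ from inside by smooth uniformly convex domains $\Omega_k\Subset\Omega_{k+1}\Subset\Omega$ with $\bigcup_k\Omega_k=\Omega$. The classical solvability theory on smooth uniformly convex domains (Caffarelli--Nirenberg--Spruck, Cheng--Yau) yields smooth convex solutions $u_k$ of $\det D^2u_k=f_k$ in $\Omega_k$, $u_k=g_k$ on $\partial\Omega_k$. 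Comparing $u_k$ with explicit convex barriers --- from above a sufficiently large constant (Monge-Amp\`ere measure zero, hence a supersolution), from below a convex function lying below $g_k$ on $\partial\Omega_k$ with a sufficiently large constant Hessian (a subsolution by the Minkowski determinant inequality) --- bounds $\{u_k\}$ uniformly on $\tir\Omega$. Local equicontinuity of convex functions (\cite[Lemma 3.2.1]{Guti'errez2001}) together with Lemma \ref{essential} then produces a subsequence converging locally uniformly to a convex $u\in C(\Omega)$. Since $M[u_k]\rightharpoonup M[u]$ by the weak continuity of the Monge-Amp\`ere operator (Lemma \ref{weak-s}) while $f_k\ud x\rightharpoonup f\ud x$, we conclude $M[u]=f\ud x$ in $\Omega$.

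\emph{Existence, boundary --- the main obstacle.} It remains to show $u\in C(\tir\Omega)$ with $u=g$ on $\partial\Omega$ in the classical sense. Near a point $\xi\in\partial\Omega$ at which $\partial\Omega$ is strictly convex, barriers adapted to a supporting hyperplane at $\xi$ (a paraboloid from below, $\tilde g$ modified by a term controlled by the distance to the hyperplane) squeeze the $u_k$ between $g(\xi)\pm o(1)$ and give the boundary value there. The genuine difficulty is at points lying in a flat face $F$ of $\partial\Omega$: a supporting hyperplane is then not a strict barrier, and one must exploit the convexity of $\tilde g$ restricted to $F$ --- in effect solving a lower-dimensional Dirichlet problem along $F$ --- to force the solution down to $g$ on $F$. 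The convex-extension hypothesis on $g$ is precisely what makes this work; without it the data need not be attained. Once the boundary behaviour is established, $u$ is the desired Aleksandrov solution, and by the uniqueness already proved the whole sequence $u_k$, not merely a subsequence, converges to it.
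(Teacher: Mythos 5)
First, note that the paper does not prove this statement at all: it is imported verbatim as Theorem~1.1 of \cite{Hartenstine2006}, so there is no internal proof to compare against. Judged on its own terms, your outline has the right global shape (comparison principle for uniqueness; regularized problems on an exhaustion by smooth uniformly convex domains, compactness of convex functions, and weak continuity of $M[\cdot]$ for interior existence), but it has two genuine gaps, one of which is precisely the content of the cited theorem.

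The smaller gap is the lower barrier. A convex function ``with a sufficiently large constant Hessian'' minorizes $u_k$ only if $(2A)^d\ge\sup f_k$, and for $f$ merely in $L^1(\Omega)$ the mollifications $f_k$ need not be uniformly bounded above, so this barrier degenerates and your uniform lower bound on $\{u_k\}$ fails. The correct tool is Aleksandrov's maximum principle, which bounds $|v(x)|^d$ by $C\,\mathrm{dist}(x,\partial\Omega)\,|M[v]|(\Omega)$ and hence only needs $\|f_k\|_{L^1}$ to be controlled; it is applied to the solution $w_0$ of the problem with zero boundary data, and the convex extension then enters through the comparison $u\ge \tilde g+w_0$ (using superadditivity of the Monge--Amp\`ere measure, $M[\tilde g+w_0]\ge M[w_0]$). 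The larger gap is the boundary attainment, which you correctly identify as the main obstacle but do not prove, and the mechanism you gesture at (solving lower-dimensional Dirichlet problems along flat faces of $\partial\Omega$) is not how the argument goes and would be hard to close. The standard squeeze is global and uniform over all boundary points, flat or not: from below, $u\ge\tilde g+w_0$ with $w_0\to 0$ at $\partial\Omega$ by the Aleksandrov estimate, so $\liminf u\ge g$ at every boundary point --- this is exactly where the hypothesis that $g$ extends to a convex $\tilde g\in C(\tir{\Omega})$ is used; from above, $u$ is convex hence subharmonic, so $u\le h$ where $h$ is the harmonic function with data $g$, and $h\in C(\tir{\Omega})$ because every boundary point of a bounded convex domain is regular for the Laplacian. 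Without these two barriers your construction produces a convex $u$ with $M[u]=f\,dx$ in $\Omega$ but does not show $u\in C(\tir{\Omega})$ with $u=g$ on $\partial\Omega$, which is the assertion being proved.
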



\subsection{Convergence of the discretization} \label{cvg}
Let $\Omega_s$ denote a sequence of smooth uniformly convex domains increasing to $\Omega$, i.e. $\Omega_s \subset \Omega_{s+1} \subset\Omega$ and $d(\partial \Omega_s, \partial \Omega) \to 0$ as $s \to \infty$. Here $d(\partial \Omega_s, \partial \Omega)$ denotes the distance between $\partial \Omega_s$ and $\partial \Omega$. For the special case $\Omega=(0,1)^2$, a construction was done in \cite{Sulman11}. A general construction follows from the approach in \cite{Blocki97}.

We recall that $f_m$ and $g_m$ are $C^{\infty}(\tir{\Omega})$ functions such that  $0 < c_2 \leq f_m \leq c_3, f_m \to f$ and $g_m \to \tilde{g}$ uniformly on $\tir{\Omega}$. 
The sequences $f_m$ and $g_m$ can be constructed by a standard mollification.

Recall from section \ref{methodns} that we choose $\tilde{m}$ such that $|u(x) - u_{\tilde{m}}(x)| < \delta$ for all $x \in \Omega$, where $\delta $ is a small parameter. And we are interested in convergence of the discretization to the solution $u_{\tilde{m}}$ of \eqref{m1m}.

By  \cite{Caffarelli1984}, 
the problem \eqref{m11}
has a unique convex solution $u_{\tilde{m} s} \in C^{\infty}(\tir{\Omega}_s)$. 
As $s \to \infty$, the sequence $u_{\tilde{m} s}$  converges uniformly  on compact subsets of $\widetilde{\Omega}$ to the unique convex solution $u_{\tilde{m}} \in C(\tir{\widetilde{\Omega}})$ of the problem \eqref{m1m}  \cite{Awanou-Std04}. 

We have by the interior Schauder estimates,  \cite[Theorem 4]{Dinew} and \cite{Awanou-Std04} for details,
\begin{equation} \label{int-Schauder}
||u_{\tilde{m} s}||_{C^2(K)} \leq C_{\tilde{m}},
\end{equation}
where the constant $C_{\tilde{m}}$ depends on $\tilde{m}, c_2$, $\widetilde{\Omega}$, $d(K, \partial \Omega)$, $f_{\tilde{m}}$ and $\max_{x \in \Omega} |u_{\tilde{m} s}(x)|$. Moreover, by a bootstrapping argument 
we have
\begin{equation} \label{int-Schauder2}
||u_{\tilde{m} s}||_{C^5(K)} \leq C_{\tilde{m}},
\end{equation}
as well.

Let us use the notation $M_h[v_h]$ for a discrete Monge-Amp\`ere operator applied to the grid function $v_h$. We consider the following analogue of \eqref{m1h-alt} and \eqref{m1h} 
\begin{align} \label{m1hstd}
M_h[u_h]=r_h(f_{\tilde{m}}) \, \text{in} \, \Omega_h^{00}, u_{h} =r_h(u_{\tilde{m}}) \, \text{on} \, \partial \Omega_h^0.
\end{align}
We can now prove the main result of this paper

\begin{thm} \label{l2-thm2}
The problem \eqref{m1hstd} has a  unique local  discrete convex solution $u_{h}$ which 
converges uniformly on compact subsets of $\widetilde{\Omega}$ 
to the unique convex solution $\tilde{u}$ of \eqref{m1m} 
as $h \to 0$.
\end{thm}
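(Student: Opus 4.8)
The plan is to reduce the problem to the smooth-solution theory already available (Theorem~\ref{first-cvg} and Proposition~\ref{second-cvg}) by working on the approximating domains $\Omega_s$, and then to pass two limits: first $s \to \infty$ with $h$ fixed, and afterwards $h \to 0$. First I would fix $s$. Since $\widetilde{\Omega} \subset \Omega_s$ and $\Omega_s$ is smooth and uniformly convex, the solution $u_{\tilde{m} s}$ supplied by \cite{Caffarelli1984} is smooth up to $\partial\Omega_s$, hence on a neighborhood of $\tir{\widetilde{\Omega}}$, so the arguments of Theorem~\ref{first-cvg} (for $M_h=\det\tir{\mathcal{H}}_d$) and Proposition~\ref{second-cvg} (for the compatible $M_h$) apply to the discretization of the Monge--Amp\`ere equation in $\Omega_h^{00}$ with data $r_h(u_{\tilde{m} s})$ on $\partial\Omega_h^0$: it has a unique local solution $u_{\tilde{m} s,h}$, discrete strictly convex with a lower bound on the relevant eigenvalue of its discrete Hessian, satisfying $\max_{x\in K}|u_{\tilde{m} s,h}-r_h u_{\tilde{m} s}|\le C_K h^2$ on each compact $K\subset\widetilde{\Omega}$. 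The crucial point is that, thanks to the interior Schauder estimates \eqref{int-Schauder} and \eqref{int-Schauder2}, the eigenvalue bound and the constants $C_K$ can be taken independent of $s$ — the global $C^5$ norm of $u_{\tilde{m} s}$ on $\tir{\Omega_s}$ is unavailable, since the strict convexity of $\partial\Omega_s$ degenerates as $s\to\infty$ — which is the interior stability of the discretization announced in the introduction.

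Next, with $h$ fixed the grid is finite, so by the Bolzano--Weierstrass part of Lemma~\ref{essential} a subsequence of $(u_{\tilde{m} s,h})_s$ converges, as $s\to\infty$, to a discrete convex grid function $u_h$ retaining the same eigenvalue lower bound. Since $M_h$ depends continuously on its entries, $r_h(f_{\tilde{m}})$ is fixed, and $r_h(u_{\tilde{m} s})\to r_h(u_{\tilde{m}})$ at the grid points of $\partial\Omega_h^0$ (these lie in $\Omega_s$ for $s$ large and $u_{\tilde{m} s}\to u_{\tilde{m}}$ uniformly on compact subsets of $\widetilde{\Omega}$), the limit $u_h$ solves \eqref{m1hstd}; its local uniqueness among discrete convex grid functions with that eigenvalue bound is inherited from the uniqueness of each $u_{\tilde{m} s,h}$ together with the uniform bounds. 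Passing to the limit in the error estimate gives $\max_{x\in K}|u_h-r_h u_{\tilde{m}}|\le C_K h^2$ on each compact $K\subset\widetilde{\Omega}$.

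Finally I would let $h\to 0$. Extending $u_h$ to a continuous convex function by interpolation and using that $u_{\tilde{m}}$ is smooth in $\widetilde{\Omega}$, the last estimate yields $u_h\to u_{\tilde{m}}$ uniformly on compact subsets of $\widetilde{\Omega}$; alternatively one extracts a subsequence converging uniformly on compact subsets by the Arzel\`a--Ascoli part of Lemma~\ref{essential}, identifies the weak limit of the associated Monge--Amp\`ere measures by Lemma~\ref{weak-s} together with the consistency of $M_h$ built into the estimates of the first step, and recovers the boundary values from $u_h=r_h(u_{\tilde{m}})$ on $\partial\Omega_h^0$ and the equicontinuity of discrete convex functions. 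By uniqueness of the Aleksandrov solution (Theorem~\ref{weak-cont}), $u_{\tilde{m}}$ restricted to $\widetilde{\Omega}$ is precisely the solution $\tilde{u}$ of \eqref{m1m}; since the limit does not depend on the chosen subsequence, the full family $u_h$ converges to $\tilde{u}$ uniformly on compact subsets of $\widetilde{\Omega}$.

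The step I expect to be the main obstacle is the first one: proving the interior analogues of the solvability and convergence estimates of Theorem~\ref{first-cvg} and Proposition~\ref{second-cvg} with constants governed only by interior norms of $u_{\tilde{m} s}$, uniformly in $s$ — in particular the interior stability bound for the linearized discrete operator, whose coefficients involve cofactors of the discrete Hessian and must be controlled away from the boundary, where the Schauder estimates degenerate. Once this is in place the two limiting arguments are routine compactness-and-uniqueness arguments; the only further care needed is to verify that the $s\to\infty$ limit in the second step solves the discrete equation with the prescribed boundary data on $\partial\Omega_h^0$ rather than a perturbed one.
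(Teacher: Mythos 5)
Your proposal is correct and follows essentially the same route as the paper: solve the discretized problem on the smooth approximating domains via Theorem~\ref{first-cvg} and Proposition~\ref{second-cvg} with constants controlled by the interior Schauder estimates \eqref{int-Schauder2} (hence independent of $s$), extract the $s\to\infty$ limit $u_h$ for fixed $h$ by the finite-grid compactness of Lemma~\ref{essential} and the continuity of eigenvalues, and then let $h\to 0$ using the $s$-uniform $O(h^2)$ interior error bound. You even correctly identify the $s$-independence of the interior stability constants as the crux, which is exactly the point the paper addresses through \eqref{int-Schauder} and \eqref{int-Schauder2}; the paper's Part~3 merely replaces your "pass to the limit in the error estimate" by an equivalent $\epsilon/6$ triangle-inequality argument.
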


\begin{proof}
Without loss of generality, we assume that the discrete Hessian takes the form $\mathcal{H}_d(v_h)$.

Recall that
$$
\Omega^{00}_h  \subset  \widetilde{\Omega} \cap \mathbb{Z}_h \subset \Omega_s \ \text{and} \   \partial  \Omega^{0}_h \subset \widetilde{\Omega} \cap \mathbb{Z}_h \subset \Omega_s.
$$

{\bf Part 1}: Existence of a discrete convex solution $u_h$

By Theorem \ref{first-cvg} and Proposition \ref{second-cvg}, applied to the problem \eqref{m11}, there exists a unique local solution $u_{\tilde{m} s,h}$ to the problem
\begin{align} \label{m11h}
M_h[ u_{\tilde{m} s,h} ]  =r_h(f_{\tilde{m}}) \, \text{in} \, \Omega_h^{00}, u_{\tilde{m} s,h} =r_h(u_{\tilde{m}}) \, \text{on} \, \partial \Omega_h^0.
\end{align}
For fixed $h$, the number of grid points is finite. Thus by Lemma \ref{essential}, there exist a subsequence  $s_q$ such that $u_{\tilde{m} s_q,h}$ converges pointwise (and hence uniformly on compact subsets of $\Omega^{00}_h$) to a mesh function $u_h$. 

By construction $\partial  \Omega^{0}_h \subset \Omega_s$ and hence for  $x \in \partial \Omega_h^0$, $u_{h}(x)=r_h(u_{\tilde{m}})(x)$.
By taking pointwise limits in \eqref{m11h}, we get that $u_h$ solves \eqref{m1hstd}. 

Since $f_m \geq c_2 >0$, as a consequence of Lemmas \ref{cone} and \ref{cone2}, $\lambda_1(D^2 u_{\tilde{m} s,h}(x)) \geq c_4 >0$ for all $x \in \Omega_h^{00}$ for a constant $c_4$ independent of $h$ and for $s$ sufficiently large. 
But $\lambda_1(\mathcal{H}_d u_{\tilde{m} s,h}(x))$ is the solution of a polynomial equation with coefficients which are combinations of entries of $(\mathcal{H}_d u_{\tilde{m} s,h}(x))_{i,j=1,\ldots,d}$. By continuity of the roots of a polynomial as a function of its coefficients \cite{Harris87}, taking a limit as $s_q \to \infty$, we obtain that $\lambda_1(\mathcal{H}_d u_{h}(x)) \geq 0$ for all $x \in \Omega_h^{00}$. That is, $u_{h}$ is also discrete 
convex. Since $r_h(f) \geq c_0 >0$, $u_{h}$ is discrete strictly convex.

For the local uniqueness of the discrete solution $u_{h}$, we note that the fixed point argument of section \ref{time1} can be repeated in the ball $B_{\rho}(u_{h})$ since $u_{h}$ is a discrete strictly convex function, c.f. Lemma \ref{last-lem}. 
By a similar argument, local uniqueness holds if one uses the discrete Hessian $\tir{\mathcal{H}_d}$ discussed in \cite{Awanou-k-Hessian12}. We conclude that $u_{\tilde{m} s,h}$ converges uniformly on compact subsets of $\Omega^{00}_h$ to $u_h$ as $s \to \infty$.

{\bf Part 2}: Uniform convergence on compact subsets of $\Omega$ of a subsequence $u_{h_l}$ to a convex function  $v$  $\in C(\widetilde{\Omega})$.

This is a direct consequence of the error estimates of Theorem \ref{first-cvg} and Proposition \ref{second-cvg}, the interior Schauder estimate $||u_{\tilde{m} s}||_{C^5(\widetilde{\Omega})} \leq C_{\tilde{m}}$ and
Lemma \ref{essential}. 
The continuity of $v$ on $\widetilde{\Omega}$ follows from its convexity, Theorem \ref{first-cvg},  Proposition \ref{second-cvg} and \eqref{int-Schauder2} which imply that $u_{\tilde{m} s,h}$ and $u_h$, hence $v$ are locally finite.

{\bf Part 3}: The continuous 
convex function $v$ is equal to the Aleksandrov solution $\tilde{u}$ of \eqref{m1m}.

Let $K$ be a compact subset of $\widetilde{\Omega}$ and let $\epsilon >0$. Since $u_{h_l}$ converges uniformly on $K$ to $v$, $\exists l_0$ such that $\forall l \geq l_0$ $|u_{h_l}(x) - v(x)|< \epsilon/6$ for all $x \in K \cap \Omega_h^{00}$.
 
By definition $u_{h_l}$ is the uniform limit on $K \cap \Omega_h^{00}$ of $u_{\tilde{m} s,h_l}$ as $s \to \infty$.  Thus $\exists s_l$ such that $\forall s \geq s_l$ $|u_{\tilde{m} s,h_l}(x) - u_{h_l}(x)|< \epsilon/6$ for all $x \in K \cap \Omega_h^{00}$.

By Theorem \ref{first-cvg},  Proposition \ref{second-cvg} and \eqref{int-Schauder2} we have on $K$ 
$|u_{\tilde{m} s,h_l}(x) - u_{\tilde{m} s}(x)|\leq C h_l^2$ for all $x \in K \cap \Omega_h^{00}$. We recall that the constant $C$ is independent of $s$ but depends on $\tilde{m}$ and $\widetilde{\Omega}$.

By the uniform convergence of $u_{\tilde{m} s}$ to $u_{\tilde{m}}$, we may assume that $|u_{\tilde{m}}(x) - u_{\tilde{m} s}(x)| < \epsilon/6$ for all $x \in K$.

We conclude that for $\forall l \geq l_0$, $\exists s_l$ such that $\forall  s \geq s_l$ $|u_{\tilde{m}}(x) - v(x) | <  \epsilon/2 + C h_l^2$ for all $x \in K \cap \Omega_h^{00}$.

For $x \in K$, if necessary by choosing a sequence $x_{h_l}$ such that $x_{h_l} \to x$ as $l \to \infty$, we get for all $\epsilon >0 $ $|u_{\tilde{m}}(x) - v(x) | < \epsilon$. We conclude that $\tilde{u}=u_{\tilde{m}}=v$ on $K$.
We have by construction $\tilde{u}=v$ on $\partial \widetilde{\Omega}$. This proves that $\tilde{u}=v$.

{\bf Part 4}: Finishing up.

By the unicity of the solution $\tilde{u}$ of \eqref{m1m} we conclude that $u_h$ converges uniformly on compact subsets of $\widetilde{\Omega}$ to $\tilde{u}$.

\end{proof}

It follows from Lemma \ref{last-lem} that the solution $u_{h}$ of \eqref{m1h} can also be computed by the time marching method \eqref{time-marching3}.

\section{Additional notation and preliminaries } \label{notation+}

\subsection{Grid functions and differential operators}
For $v \in C^2(\tir{\Omega})$, by a Taylor series expansion, we have 
\begin{align} \label{first-order}
 \frac{\partial v}{\partial x_i} =  \partial^i_{-}  r_h(v) + O(h), \,  \frac{\partial v}{\partial x_i} =  \partial^i_{+}  r_h(v) + O(h), i=1,\ldots,d. 
\end{align}
and for $v \in C^4(\tir{\Omega})$
\begin{align}  \label{second-order}
\begin{split}
 \frac{\partial^2 v}{\partial x_i^2 } =  \partial^i_{-} \partial^i_{+} r_h(v) + O(h^2), 
 \frac{\partial^2 v}{\partial x_i \partial x_j} =  \partial^j_{-} \partial^i_{+} r_h(v) + O(h),  i,j=1,\ldots,d, i \neq j. 
 \end{split}
\end{align}

We now discuss key properties of the continuous analogues of the operators $\div_h, D_h$ and $\tir{D}_h $ which need to be modeled at the discrete level. 

For a vector field $v=(v_i)$, we define $Dv$ as the matrix field with $(D v)_{ij} = \partial v_i/\partial x_j$. Given a matrix field $A$, we define $\div A$ as the vector field resulting from the application of the operator $\div$ to each row, i.e. $(\div A)_i = \sum_{j=1}^d \partial A_{i j} / \partial x_j,  i=1,\ldots,d$. The Frobenius inner product of two matrices $A=(A_{ij})$ and $B=(B_{ij})$ is defined as  $A: B=\sum_{i,j=1}^n A_{ij} B_{ij}$. 
We recall that the cofactor matrix $\cof A$ of the matrix $A$ is defined by $(\cof A)_{ij}=(-1)^{i+j} \det(A)_i^j$ where $\det(A)_i^j$ is the determinant of the matrix obtained from $A$ by deleting the $i$th row and the $j$th column.

For a $d \times d$ matrix $A$, using the row expansion definition of determinant, one obtains
\begin{equation} \label{obs1}
d \det A =(\cof A):A, 
\end{equation}
and for a vector field $v$ and matrix field $A$, one obtains using the product rule of differentiation
\begin{equation}  \label{obs2}
\div (A v) = (\div A^T) \cdot v + A: (D v)^T.
\end{equation}
 
For $v \in C^3(\Omega)$, we have the divergence-free row property of the cofactor matrix, \cite[p. 440 ]{Evans1998} 
\begin{equation}  \label{obs3}
\div \cof D^2 v =0.
\end{equation}
It follows from \eqref{obs1} that for a $C^2(\Omega)$ function $v$,
\begin{equation} \label{obs11}
 \det D^2 v = \frac{1}{d}(\cof D^2 v):D^2 v. 
\end{equation}
Using \eqref{obs2}, \eqref{obs3} and the symmetry of $\cof D^2 v$ and $D^2 v$ one obtains
for  a $ C^3(\Omega)$ function $v$
\begin{equation} \label{obs22}
 \det D^2 v = \frac{1}{d} \div [(\cof D^2 v) D v]. 
\end{equation}
We have the following lemma which says that the compatible discretization is first order consistent. The proof requires only elementary computations and is given in section \ref{appendix}.

\begin{lemma} \label{det-ap-lem}
We have
\begin{align} 
 \frac{1}{d}  \div_h [(\cof \mathcal{H}_d (r_h v) )^T D_h r_h v](x) - \det D^2 v (x) = O(h) \label{ap03} \\
 \frac{1}{d}  \div_h [(\cof \sym \mathcal{H}_d (r_h v) ) D_h r_h v](x) - \det D^2 v (x) = O(h). \label{ap04}
\end{align}

\end{lemma}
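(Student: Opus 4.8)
The plan is to establish \eqref{ap03} and \eqref{ap04} by direct Taylor expansion, mirroring at the discrete level the sequence of continuous identities \eqref{obs11}--\eqref{obs22} that were just recalled. The key observation is that each building block of the compatible discretization is a consistent approximation of its continuous counterpart: by \eqref{first-order} and \eqref{second-order}, $\mathcal{H}_d(r_h v)(x) = D^2 v(x) + O(h)$ entrywise (the off-diagonal mixed differences $\partial^j_- \partial^i_+ r_h v$ are only first-order accurate, which is the source of the $O(h)$ error throughout), $D_h r_h v(x) = D v(x) + O(h)$, and $\div_h$ applied to a matrix field whose entries are $C^1$ and sampled from a smooth field is the corresponding $\div$ up to $O(h)$. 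Since $\cof$ is a polynomial map in the matrix entries, $\cof \mathcal{H}_d(r_h v)(x) = \cof D^2 v(x) + O(h)$, and likewise $\cof \sym \mathcal{H}_d(r_h v)(x) = \cof D^2 v(x) + O(h)$ because $\sym \mathcal{H}_d(r_h v)(x) = D^2 v(x) + O(h)$ as well (the symmetrization only moves terms by $O(h)$, and the true Hessian is already symmetric).

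The delicate point is that $\div_h$ involves dividing a difference of $O(h)$-accurate quantities by $h$, so one cannot simply say ``$\div_h$ of something $O(h)$-close to a smooth field is $O(h)$-close to its divergence'' without care — a naive bound would only give $O(1)$. The correct route is to expand the whole composite expression $\frac{1}{d} \div_h[(\cof \mathcal{H}_d(r_h v))^T D_h r_h v](x)$ as a single finite-difference stencil in $v$ and its derivatives, Taylor expand $v$ about $x$ to sufficient order (here $C^3$ suffices for the leading terms, though one should track that $v \in C^3(\Omega)$ or slightly more is what is being assumed), and check that the $O(1)$ terms assemble precisely to $\frac{1}{d}(\cof D^2 v(x)) : D^2 v(x) = \det D^2 v(x)$ via \eqref{obs11}, while the next-order terms are $O(h)$. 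In other words, the cancellation that makes this work is exactly the discrete analogue of the divergence-free row property \eqref{obs3} combined with the product rule \eqref{obs2}: one writes $\div_h[(\cof \mathcal{H}_d r_h v)^T D_h r_h v]$ as a discrete product-rule expansion into a term resembling $(\div_h \cof \mathcal{H}_d r_h v) \cdot D_h r_h v$ plus a term resembling $\cof \mathcal{H}_d r_h v : \tir{D}_h D_h r_h v$, observe that the first is $O(h)$ because $\div \cof D^2 v = 0$ and the discrete cofactor rows are divergence-free up to $O(h)$, and identify the second with $\frac{1}{d}(\cof D^2 v) : D^2 v + O(h)$.

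Concretely I would: first record the entrywise consistency estimates for $\mathcal{H}_d(r_h v)$, $\sym \mathcal{H}_d(r_h v)$, $D_h r_h v$ from \eqref{first-order}--\eqref{second-order}; second, expand $\cof$ of these matrices using its polynomial structure to get $\cof$-level estimates; third, carry out the discrete product rule for $\div_h$ acting on a matrix field times a vector field, isolating the discrete divergence-of-cofactor term; fourth, prove the discrete divergence-free row property $\div_h \cof \mathcal{H}_d(r_h v) = O(h)$ by a direct Taylor expansion of that specific stencil (this is the discrete version of \eqref{obs3}); fifth, combine with \eqref{obs11} to collect the leading term as $\det D^2 v(x)$. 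The computation for \eqref{ap04} is identical once one notes $\sym \mathcal{H}_d(r_h v)$ and $\mathcal{H}_d(r_h v)$ agree up to $O(h)$ and both have cofactor equal to $\cof D^2 v + O(h)$, and that $\cof$ of a symmetric matrix is symmetric so the transpose in \eqref{ap03} is immaterial there.

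The main obstacle I anticipate is the fourth step — verifying that the discrete cofactor matrix has approximately divergence-free rows — since $\cof \mathcal{H}_d(r_h v)$ is built from first-order-accurate mixed second differences, and one must confirm that the $h^{-1}$ amplification in $\div_h$ does not promote those $O(h)$ errors to $O(1)$; equivalently, that the natural candidate for the $O(1)$ part of $\div_h \cof \mathcal{H}_d(r_h v)$ vanishes identically, just as in the smooth case. This is purely a matter of organizing the Taylor expansion carefully (and is why the lemma is relegated to the appendix as ``elementary computations''), but it is where all the real content sits; once it is in hand, \eqref{ap03} and \eqref{ap04} follow by assembling the pieces and invoking \eqref{obs11}.
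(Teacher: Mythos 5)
Your proposal is correct and follows essentially the same route as the paper's proof: a discrete Leibniz/product rule splits $\div_h[(\cof \mathcal{H}_d(r_h v))^T D_h r_h v]$ into a term that is an $O(h)$-consistent approximation of $(\div \cof D^2 v)\cdot Dv$, which vanishes by \eqref{obs3}, plus a Frobenius-product term that is $O(h)$-close to $\frac{1}{d}(\cof D^2 v):D^2 v = \det D^2 v$. The point you flag as the main obstacle --- that divided differences of the $O(h)$-accurate cofactor entries still approximate the corresponding third derivatives to first order, so the $h^{-1}$ in $\div_h$ does not promote the error to $O(1)$ --- is exactly how the paper disposes of the first term.
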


\subsection{Discrete norms}

Analogues of the Sobolev spaces can be defined on $\Omega_h$. We start with the analogue of the $L^2$ 
inner product and norm. For $v_h, w_h \in \mathcal{M}(\Omega_h)$ we define
\begin{align*}
\<v_h,w_h \> = h^d \sum_{x \in \Omega_h^0} v_h(x) w_h(x) \, \text{and} \, ||v_h||_{0,h} = \sqrt{\<v_h,v_h\>}.
\end{align*}
Analogously, put
\begin{align*}
||v_h||_{1,h}  = \bigg( ||v_h||_{0,h}^2 + \sum_{i=1}^d ||  \partial^i_{+}  v_h||^2_{0,h}\bigg)^{\frac{1}{2}}, \
|v_h|_{1,h}  = \bigg( \sum_{i=1}^d ||  \partial^i_{+}  v_h||^2_{0,h}\bigg)^{\frac{1}{2}}.
\end{align*}
We define
\begin{align*}
L^2(\Omega_h)  & = \{ \, v_h \in \mathcal{M}(\Omega_h),  ||v_h||_{0,h} < \infty  \, \}, \
H^1(\Omega_h)  = \{ \, v_h \in \mathcal{M}(\Omega_h),  ||v_h||_{1,h} < \infty  \, \} \ \text{and} \ \\
H_0^1(\Omega_h)  & = \{ \, v_h \in H^1(\Omega_h),  v_h = 0 \, \text{on} \,  \partial \Omega_h \, \}.
\end{align*}

We define on $ \mathcal{M}(\Omega_h)$ the  semi-norms
\begin{align*}
|v_h|_{2,\infty,h} & = \max \{ \,\partial^j_- \partial^i_+ v_h(x), x \in \Omega_h^0, i,j=1,\ldots, d \, \} \\
|v_h|_{1,\infty,h} & = \max \{ \, \partial^i_+ v_h(x), x \in \Omega_h^0, i=1,\ldots, d \, \}.
\end{align*}
We will need the following related norm which takes into account the second order discrete derivatives
\begin{align} \label{max-2nd}
||v_h||_{2,\infty,h} & = \max \{ \,v_h(x), \partial^i_+ v_h(x), \partial^j_+ \partial^i_+ v_h(x), x \in \Omega_h^0, i,j=1,\ldots, d \, \}, 
\end{align}

We will also need the maximum norm
\begin{align} \label{max-1st}
|v_h|_{0,\infty,h}  = \max \{ \, v_h(x), x \in \Omega_h^0 \, \}.
\end{align}

Using the definitions, it is not difficult to check that
\begin{align} \label{int-part}
\< \partial^i_+ v_h, w_h \> =  -\<  v_h, \partial^i_- w_h \> , i=1,\ldots,d, v_h, w_h \in H_0^1(\Omega_h).
\end{align}

Since $|v_h|_{0,\infty,h}^2 \leq \sum_{x \in \Omega_h^0 } |v_h(x)|^2$, we obtain
\begin{equation} \label{inverse0-inf-0}
|v_h|_{0,\infty,h} \leq h^{-\frac{d}{2}} ||v_h||_{0,h}.
\end{equation}
Using \eqref{first-order}, \eqref{second-order} and \eqref{inverse0-inf-0}, we get
\begin{align}
|v_h|_{2,\infty,h} & \leq C h^{-1} |v_h|_{1,\infty,h} \leq C h^{-\frac{d}{2}-1} |v_h|_{1,h}, \label{inverse1}
\end{align}
and
\begin{align*}
|v_h|_{1,\infty,h} & \leq  C h^{-\frac{d}{2}} |v_h|_{1,h}. 
\end{align*} 
Inequality \eqref{inverse1} is an inverse type estimate with constant given by
\begin{align} \label{inverse-C}
C_{inv}(h) = C h^{-\frac{d}{2}-1}.
\end{align}
We have the discrete Poincare's inequality, see for example \cite[Lemma 3.1]{Chung97} 
\begin{lemma} \label{poincare}
There exists a constant $C_p >0$ independent of $h$ such that for $v_h \in H_0^1(\Omega_h)$, 
$$
|v_h|_{1,h}  \geq C_p ||v_h||_{0,h}. 
$$
\end{lemma}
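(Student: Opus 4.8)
The plan is to prove the inequality by a discrete Friedrichs/Poincar\'e argument: reconstruct $v_h$ at each node of $\Omega_h^0$ from the forward differences along one coordinate direction, using that $\Omega$ is bounded (so the reconstructing sum has only $O(1/h)$ terms) and that $v_h$ vanishes on $\partial \Omega_h$ (so the sum terminates at a zero). Throughout, fix the direction $e_1$ and set $D:=\diam \Omega$.

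The one geometric input needed is a consequence of the convexity of $\Omega$: on any line $\ell$ parallel to $e_1$, the grid points lying in $\Omega_h^0$ form a contiguous run $x^{\mathrm{left}}, x^{\mathrm{left}}+he_1,\dots,x^{\mathrm{right}}$. Indeed, if $x',x''\in\Omega_h^0$ lie on $\ell$ then every intermediate grid point $y$ satisfies $y\pm he_i\pm he_j\in\tir{\Omega}$, since $y\pm he_i\pm he_j$ lies on the segment joining the corresponding points for $x'$ and $x''$, both in the convex set $\tir{\Omega}$. Moreover $x^{\mathrm{right}}+he_1$ belongs to $\Omega_h$ --- this is forced by the definition of $\Omega_h^0$ --- but not to $\Omega_h^0$, hence $x^{\mathrm{right}}+he_1\in\partial \Omega_h$ and $v_h(x^{\mathrm{right}}+he_1)=0$; similarly for $x^{\mathrm{left}}-he_1$.

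With this in hand I would, for $x\in\Omega_h^0\cap\ell$, telescope: $v_h(x)=v_h(x)-v_h(x^{\mathrm{right}}+he_1)=-\sum_j h\,\partial^1_+ v_h(x^{\mathrm{left}}+jhe_1)$, the sum running over the part of the run from $x$ to $x^{\mathrm{right}}$; every node occurring here lies in $\Omega_h^0$, so every difference that appears is among those summed in $|v_h|_{1,h}$. Cauchy--Schwarz together with the bound that the run has at most $D/h+1$ nodes (and $h<1$) gives $|v_h(x)|^2\le (D+1)h\sum_{y\in\ell\cap\Omega_h^0}|\partial^1_+ v_h(y)|^2$; summing over $x\in\Omega_h^0$ grouped by lines, and using once more that each run has at most $(D+1)/h$ nodes, yields $\sum_{x\in\Omega_h^0}|v_h(x)|^2\le (D+1)^2\sum_{x\in\Omega_h^0}|\partial^1_+ v_h(x)|^2$. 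Multiplying by $h^d$ gives $\|v_h\|_{0,h}^2\le (D+1)^2\|\partial^1_+ v_h\|_{0,h}^2\le (D+1)^2|v_h|_{1,h}^2$, so $C_p=(\diam \Omega+1)^{-1}$ works. The only delicate point is the boundary-layer bookkeeping: $|v_h|_{1,h}$ sums forward differences only over $\Omega_h^0$, not over all of $\Omega_h$, so the argument would break if a telescoped difference were anchored at a node of $\partial \Omega_h$; the contiguity-of-slices fact is exactly what prevents this and is therefore the crux --- everything else is Cauchy--Schwarz and counting. (Alternatively one may simply invoke \cite[Lemma 3.1]{Chung97}.)
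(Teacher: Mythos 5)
Your proof is correct. Note that the paper does not actually prove this lemma --- it simply cites \cite[Lemma 3.1]{Chung97} --- so you have supplied a self-contained argument where the paper relies on a reference. Your argument is the standard discrete Friedrichs/Poincar\'e telescoping proof, and you have correctly handled the one point that is specific to this paper's setup: the norms $\|\cdot\|_{0,h}$ and $|\cdot|_{1,h}$ sum only over $\Omega_h^0$, while $v_h$ vanishes on $\partial\Omega_h=\Omega_h\setminus\Omega_h^0$ rather than on all grid points outside $\Omega$. The two geometric facts you use both check out: (i) convexity of $\tir{\Omega}$ makes each slice $\ell\cap\Omega_h^0$ a contiguous run, since the defining condition $y\pm he_i\pm he_j\in\tir{\Omega}$ is preserved under convex combination of the corresponding points for the two endpoints of the run; and (ii) $x^{\mathrm{right}}+he_1\in\tir{\Omega}$ because it is the midpoint of $x^{\mathrm{right}}+he_1+he_2$ and $x^{\mathrm{right}}+he_1-he_2$ (or of $x^{\mathrm{right}}$ and $x^{\mathrm{right}}+2he_1$), both in $\tir{\Omega}$, so the telescoping is anchored at a genuine node of $\partial\Omega_h$ where $v_h=0$, while every forward difference that appears is taken at a node of $\Omega_h^0$ and hence is accounted for in $|v_h|_{1,h}$. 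The counting with $N\le D/h+1\le (D+1)/h$ for $h<1$ and the resulting constant $C_p=(\diam\Omega+1)^{-1}$ are also correct.
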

By the integration by parts formula \eqref{int-part}, we obtain for $v_h \in H_0^1(\Omega_h)$, 
\begin{align} \label{weight}
-\<\div_h (D_h v_h), v_h \> = |v_h|_{1,h}^2.
\end{align}
And for $v_h, w_h \in \mathcal{M}(\Omega_h)$,
\begin{align} \label{schwarz}
|\<v_h, w_h \> | & \leq ||v_h||_{0,h} ||w_h||_{0,h} \leq C |v_h|_{0,\infty,h} ||w_h||_{0,h}. 
\end{align}

\subsection{On the cone of discrete convex functions}
Let us denote by $K(\Omega_h)$ the cone of discrete convex functions on $\Omega_h$ and by $K_0(\Omega_h)$ the cone of  discrete strictly convex functions on $\Omega_h$.

 By the continuity of the eigenvalues of $A$ as a function of its entries, 
\cite[Theorem 1 and Remark 2 p. 39]{Hoffman53}, we have for two symmetric $d \times d$ matrices $A$ and $B$, 
\begin{equation} \label{cont-eig}
|\lambda_k(A) - \lambda_k(B)| \leq d \max_{i,j=1,\ldots,d} |A_{ij} - B_{ij}|, k=1, d.
\end{equation}
Since for two $d \times d$ matrices $A$ and $B$
\begin{align*}
\max_{i,j=1,\ldots,d} |(\sym A)_{ij} - (\sym B)_{ij} | = \frac{1}{2} \max_{i,j=1,\ldots,d} |A_{ij} - B_{ij} + A^T_{ij} - B^T_{ij}  | \leq  \max_{i,j=1,\ldots,d} |A_{ij} - B_{ij}|,
\end{align*}
it follows that for $v_h, w_h \in \mathcal{M}(\Omega_h)$, 
\begin{align} \label{continuity-eig}
|\lambda_k(\sym \mathcal{H}_d ( v_h ) ) - \lambda_k( \sym \mathcal{H}_d ( w_h ) ) | & \leq d |v_h-w_h|_{2,\infty,h}, k=1,d.
\end{align}

\begin{lemma} \label{cone}
Let $v_h \in K_0(\Omega_h)$ and assume that
$$
\lambda_1(\sym \mathcal{H}_d ( v_h ) ) \geq C_{v_h} >0 \, \text{on} \, \Omega_h^0.
$$
Then
$$
\bigg\{ \, w_h \in \mathcal{M}(\Omega_h), |v_h-w_h|_{1,h} \leq \frac{C_{v_h} }{2 d C_{inv}(h)}  \, \bigg\} \subset K_0(\Omega_h).
$$
\end{lemma}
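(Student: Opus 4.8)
The plan is to show that any $w_h$ in the prescribed ball has $\sym \mathcal{H}_d(w_h)$ positive definite on all of $\Omega_h^0$, which is precisely the statement that $w_h \in K_0(\Omega_h)$. The natural tool is the eigenvalue perturbation bound already recorded in \eqref{continuity-eig}: for each $x \in \Omega_h^0$,
\begin{equation*}
|\lambda_1(\sym \mathcal{H}_d(v_h)(x)) - \lambda_1(\sym \mathcal{H}_d(w_h)(x))| \leq d\, |v_h - w_h|_{2,\infty,h}.
\end{equation*}
So the smallest eigenvalue of $\sym \mathcal{H}_d(w_h)(x)$ is at least $C_{v_h} - d\,|v_h-w_h|_{2,\infty,h}$, and it suffices to control the right-hand side.

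The second step is to convert the $|\cdot|_{2,\infty,h}$ seminorm into the $|\cdot|_{1,h}$ seminorm via the inverse estimate \eqref{inverse1}, namely $|v_h - w_h|_{2,\infty,h} \leq C_{inv}(h)\, |v_h - w_h|_{1,h}$ with $C_{inv}(h) = C h^{-d/2 - 1}$ as in \eqref{inverse-C}. Combining this with the hypothesis $|v_h - w_h|_{1,h} \leq C_{v_h}/(2 d\, C_{inv}(h))$ gives
\begin{equation*}
d\, |v_h - w_h|_{2,\infty,h} \leq d\, C_{inv}(h)\, |v_h - w_h|_{1,h} \leq d\, C_{inv}(h) \cdot \frac{C_{v_h}}{2 d\, C_{inv}(h)} = \frac{C_{v_h}}{2}.
\end{equation*}
Hence $\lambda_1(\sym \mathcal{H}_d(w_h)(x)) \geq C_{v_h} - C_{v_h}/2 = C_{v_h}/2 > 0$ for every $x \in \Omega_h^0$, so $\sym \mathcal{H}_d(w_h)(x)$ — equivalently $\mathcal{H}_d(w_h)(x)$, which has the same symmetric part — is positive definite, i.e. $w_h \in K_0(\Omega_h)$.

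There is essentially no obstacle here beyond bookkeeping: the argument is a one-line application of the two inequalities already established just before the statement, and the factor $2d$ in the radius is exactly engineered so that the perturbation eats at most half of the spectral gap $C_{v_h}$. The only point to be slightly careful about is that the inverse estimate \eqref{inverse1} is stated with a generic constant $C$, and the same $C$ must be the one appearing inside $C_{inv}(h)$ in \eqref{inverse-C}; since both refer to the same inequality this is consistent. One should also note, for completeness, that the estimate $d\,|v_h - w_h|_{2,\infty,h} \le C_{v_h}/2$ is uniform over $\Omega_h^0$ because the seminorm $|\cdot|_{2,\infty,h}$ already takes the maximum over all grid points, so the conclusion $\lambda_1 \geq C_{v_h}/2$ holds simultaneously at every $x \in \Omega_h^0$.
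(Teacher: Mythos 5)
Your argument is correct and is essentially identical to the paper's own proof: both combine the inverse estimate \eqref{inverse1} with the eigenvalue continuity bound \eqref{continuity-eig} to show the perturbation of $\lambda_1$ is at most $C_{v_h}/2$, leaving $\lambda_1(\sym \mathcal{H}_d(w_h)) \geq C_{v_h}/2 > 0$. Nothing is missing.
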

\begin{proof}
We have
\begin{align*}
|v_h-w_h|_{2,\infty,h} \leq  C_{inv}(h) |v_h-w_h|_{1,h}.
\end{align*}
Thus if $|v_h-w_h|_{1,h} \leq C_{v_h}/(2 d C_{inv}(h))$, we have
$$
|\lambda_1(\sym \mathcal{H}_d ( v_h ) ) - \lambda_1( \sym \mathcal{H}_d ( w_h ) ) | \leq \frac{C_{v_h}}{2},
$$
and so
\begin{align*}
 \lambda_1( \sym \mathcal{H}_d ( w_h ) ) \geq  \lambda_1( \sym \mathcal{H}_d ( v_h ) ) - \frac{C_{v_h}}{2} \geq \frac{C_{v_h}}{2} >0.
\end{align*}
This proves the result.
\end{proof}

The next lemma says that if $v \in C^2(\tir{\Omega})$ is strictly convex, a lower bound on the smallest eigenvalue of $\sym  \mathcal{H}_d  r_h v $ is independent of $h$, for $h$ sufficiently small.

\begin{lemma} \label{cone2}
Let  $v \in C^2(\tir{\Omega})$ be a strictly convex function. Assume that 
$$
r \leq \lambda_1(D^2 v) \leq \lambda_d(D^2 v) \leq R,
$$ 
on $\Omega$ for constants $r, R >0$. Then for $h$ sufficiently small
$$
\frac{r}{2} \leq \lambda_1(\sym \mathcal{H}_d  r_h v) \leq  \lambda_d(\sym \mathcal{H}_d  r_h v) \leq \frac{3 R}{2}.
$$
\end{lemma}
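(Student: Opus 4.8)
The plan is to exploit the consistency of the discrete second-order operators together with the Lipschitz estimate \eqref{cont-eig} for eigenvalues of symmetric matrices. First I would observe that since $v \in C^2(\tir\Omega)$, the entries of $D^2 v$ are continuous, hence uniformly continuous, on the compact set $\tir\Omega$; in particular the bounds $r \le \lambda_1(D^2 v) \le \lambda_d(D^2 v) \le R$ hold at every point. The key is to compare the symmetric matrix $\sym \mathcal H_d(r_h v)(x)$ with $D^2 v(x)$ entry by entry. Using \eqref{second-order}, the diagonal entries satisfy $\partial^i_- \partial^i_+ r_h v(x) = \partial^2 v/\partial x_i^2(x) + O(h^2)$ provided $v \in C^4$; but since here $v$ is only $C^2$, I would instead argue by uniform continuity of the second derivatives directly: a Taylor expansion with integral remainder gives
$$
\partial^i_- \partial^i_+ r_h v(x) = \int_0^1 (1-|t|)\,\frac{\partial^2 v}{\partial x_i^2}(x + t h e_i)\,\omega(t)\,dt
$$
type identities, so that each entry of $\mathcal H_d(r_h v)(x)$ differs from the corresponding entry of $D^2 v(x)$ by a quantity bounded by the modulus of continuity of $D^2 v$, which tends to $0$ as $h \to 0$ uniformly in $x$. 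Symmetrizing only improves this bound (the computation just above Lemma \ref{cone}).

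Next I would combine this with \eqref{cont-eig}: writing $\varepsilon(h) := d \max_{i,j} \sup_{x}|(\sym\mathcal H_d(r_h v))_{ij}(x) - (D^2v)_{ij}(x)|$, we get $|\lambda_k(\sym\mathcal H_d r_h v(x)) - \lambda_k(D^2 v(x))| \le \varepsilon(h)$ for $k = 1, d$ and all $x \in \Omega_h^0$, with $\varepsilon(h) \to 0$. Choosing $h$ small enough that $\varepsilon(h) \le r/2$ (note $r/2 \le R/2$, so the same $h$ works for the upper bound since $\varepsilon(h) \le r/2 \le R/2$), we obtain
$$
\lambda_1(\sym\mathcal H_d r_h v(x)) \ge \lambda_1(D^2 v(x)) - \tfrac r2 \ge r - \tfrac r2 = \tfrac r2,
$$
$$
\lambda_d(\sym\mathcal H_d r_h v(x)) \le \lambda_d(D^2 v(x)) + \tfrac r2 \le R + \tfrac R2 = \tfrac{3R}{2},
$$
and the intermediate inequality $\lambda_1 \le \lambda_d$ is automatic. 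This is exactly the claimed chain of inequalities, uniformly over $\Omega_h^0$, for $h$ sufficiently small.

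The main subtlety — and the only place one must be slightly careful — is the regularity mismatch: the estimate \eqref{second-order} as stated requires $v \in C^4$ to get the $O(h^2)$ (and $O(h)$ off-diagonal) error terms, whereas the hypothesis here is merely $v \in C^2(\tir\Omega)$. So rather than invoking \eqref{second-order}, I would base the entrywise comparison purely on uniform continuity of the entries of $D^2 v$ on the compact set $\tir\Omega$, which yields convergence of the difference quotients to the second derivatives without any rate. A second minor point is that difference quotients at $x \in \Omega_h^0$ involve the points $x \pm h e_i \pm h e_j$, which lie in $\tir\Omega$ by the very definition of $\Omega_h^0$, so all the function evaluations are legitimate. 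No rate of convergence is claimed in the statement, so the qualitative argument suffices; everything else is the routine two-line eigenvalue bookkeeping displayed above.
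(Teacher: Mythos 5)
Your proof is correct and follows essentially the same route as the paper: an entrywise comparison of $\sym\mathcal{H}_d(r_h v)$ with $D^2 v$ combined with the eigenvalue continuity estimate \eqref{cont-eig}/\eqref{continuity-eig}, then choosing $h$ small enough that the perturbation is at most $r/2$. Your extra care in replacing the Taylor expansion of \eqref{second-order} (which needs $C^4$) by the uniform continuity of the entries of $D^2 v$ on $\tir{\Omega}$ is exactly the right way to handle the $C^2$ hypothesis, and your threshold bookkeeping ($\varepsilon(h)\le r/2\le R/2$ since $r\le R$) is sound.
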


\begin{proof}
The proof is similar to the one of Lemma \ref{cone} using \eqref{continuity-eig}. It is enough to prove that
$$
\max_{i,j=1,\ldots,d} \bigg|\frac{\partial^2 v}{\partial x_i \partial x_j} - (\mathcal{H}_d  r_h v)_{ij}  \bigg| \leq \max \bigg( \frac{r}{2 d}, \frac{R}{ d}  \bigg).
$$
But this holds for $h$ sufficiently small using a Taylor series expansion.
\end{proof}

\begin{lemma} \label{lem-1}
Let $A$ be a symmetric matrix such that 
$$
0< r \leq \lambda_1(A)  \leq \lambda_d(A) \leq R.
$$
Then
$$
r' \leq \lambda_1(\cof A) \leq \lambda_d (\cof A)  \leq R', 
$$ 
with $r'=(r)^{d}/R$ and $R'=(R)^{d}/r$.
\end{lemma}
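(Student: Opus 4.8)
The plan is to diagonalize $A$ and read off the eigenvalues of $\cof A$ directly. Since $A$ is symmetric, write $A = Q \Lambda Q^T$ with $Q$ orthogonal and $\Lambda = \operatorname{diag}(\mu_1,\ldots,\mu_d)$, where $r \leq \mu_1 \leq \cdots \leq \mu_d \leq R$. The key observation is that the cofactor operation commutes with this orthogonal conjugation: $\cof(Q \Lambda Q^T) = Q\, (\cof \Lambda)\, Q^T$, because $\cof(BC) = (\cof B)(\cof C)$ for square matrices and $\cof Q = (\det Q) Q = \pm Q$ for orthogonal $Q$, so the two factors of $\det Q = \pm 1$ cancel. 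Hence $\cof A$ is symmetric with the same eigenvectors as $A$ and eigenvalues $\cof \Lambda$.

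Next I would compute $\cof \Lambda$ explicitly. For a diagonal matrix, the $(i,i)$ cofactor is the product of all the other diagonal entries, $(\cof \Lambda)_{ii} = \prod_{k \neq i} \mu_k = (\det A)/\mu_i = (\mu_1 \cdots \mu_d)/\mu_i$, and the off-diagonal cofactors vanish. So the eigenvalues of $\cof A$ are exactly $\nu_i := (\prod_k \mu_k)/\mu_i$ for $i = 1,\ldots,d$. It remains to bound these. Since $\mu_k \leq R$ for all $k$, we have $\prod_{k} \mu_k \leq R^d$; dividing by $\mu_i \geq r$ wait — I must be careful: to bound $\nu_i$ from above I want $\prod_k \mu_k$ large relative to $\mu_i$, i.e. $\nu_i = \prod_{k\neq i}\mu_k \leq R^{d-1}$, but the statement claims $R' = R^d/r$, which is a valid (weaker) upper bound since $R^{d-1} = R^d/R \leq R^d/r$. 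More transparently: $\nu_i = (\prod_k \mu_k)/\mu_i \leq R^d/r$ using $\prod_k \mu_k \leq R^d$ and $\mu_i \geq r$. Symmetrically, $\nu_i = (\prod_k \mu_k)/\mu_i \geq r^d/R$ using $\prod_k \mu_k \geq r^d$ and $\mu_i \leq R$. This gives $r' = r^d/R \leq \lambda_1(\cof A) \leq \lambda_d(\cof A) \leq R^d/r = R'$, which is exactly the claim.

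I do not anticipate a genuine obstacle here; the only point requiring a moment's care is justifying $\cof(Q\Lambda Q^T) = Q(\cof\Lambda)Q^T$, which follows from the multiplicativity of the cofactor map (itself a consequence of $\cof A = (\det A) A^{-T}$ for invertible $A$, extended to the singular case by continuity) together with $Q^{-T} = Q$ for orthogonal $Q$. An even more elementary route, avoiding the cofactor algebra entirely, is: if $A$ is invertible then $\cof A = (\det A)\, A^{-T} = (\det A)\, A^{-1}$ (using symmetry), whose eigenvalues are $(\det A)/\mu_i$; the bounds on these follow as above, and the case of general positive definite $A$ with $\mu_1 \geq r > 0$ is already invertible, so no limiting argument is even needed. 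I would present this second version as the cleaner proof.
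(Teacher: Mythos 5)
Your proof is correct, and the ``cleaner'' second version you settle on is exactly the paper's argument: write $\cof A = (\det A)(A^{-1})^T$, observe the eigenvalues are $\det A/\lambda_i$, and bound via $r^d \leq \det A \leq R^d$ and $r \leq \lambda_i \leq R$. The diagonalization route and the sharper observation $\lambda_d(\cof A)\leq R^{d-1}$ are fine but unnecessary embellishments.
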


\begin{proof} 
Since $A$ is an invertible  matrix,
$\cof A = (\det A) (A^{-1})^T$. Recall that $A$ and $A^T$ have the same set of eigenvalues. Hence  the eigenvalues of $\cof A$ are of the form $ \det A/\lambda_i$ where 
$\lambda_i, i=1,\ldots,d$ is an eigenvalue of $A$. Since $r^d \leq \det A \leq R^d$, we get the result.

\end{proof}

\section{Convergence of the time marching method for the compatible discretization} \label{time1}

In this section, we assume that \eqref{m1} has a strictly convex solution $u \in C^4(\tir{\Omega})$. We prove that the problem
 \begin{align} \label{m0h}
\frac{1}{d} \div_h [ (\cof \sym \mathcal{H}_d u_{h} ) D_h u_{h} ]  =r_h(f) \, \text{in} \, \Omega_h^0, u_{h} =r_h(\tilde{g}) \, \text{on} \, \partial \Omega_h,
\end{align}
has a  discrete strictly convex solution in
$$
B_{\rho}(r_h u) = \{ \, v_h \in \mathcal{M}(\Omega_h), |v_h-r_h(u)|_{1,h} \leq \rho \, \},
$$
for $\rho=O(h^{1+d/2} )$ and $h$ sufficiently small.



As with \cite{Awanou-Std01} we use a ''rescaling argument''. Let $\alpha >0$ be a positive parameter. Note that if $v_h \in K_0(\Omega_h)$, we also have $\alpha v_h \in K_0(\Omega_h)$.

Since $u \in C^2(\tir{\Omega})$ is a strictly convex function, there exists positive constants $r$ and $R$ such that
$$
r \leq \lambda_1(D^2 u) \leq \lambda_d(D^2 u) \leq R,
$$ 
on $\Omega$. By Lemma \ref{cone2}, for $h$ sufficiently small, on $\Omega_h^0$
$$
\frac{r}{2} \leq \lambda_1( \sym \mathcal{H}_d  r_h u) \leq  \lambda_d( \sym \mathcal{H}_d  r_h u) \leq \frac{3 R}{2}.
$$
By Lemma \ref{cone}, and using the estimate \eqref{inverse-C}, there exists $C_0 >0$ such that for
\begin{equation} \label{const-c0}
|v_h-r_h(u)|_{1,h} \leq C_0 h^{\frac{d}{2} + 1},
\end{equation}
$v_h \in  K_0(\Omega_h)$. Moreover as in the proof of Lemma \ref{cone}, one shows that there exists $r_1 >0$ independent of $h$ such that $\lambda_1(\sym \mathcal{H}_d  v_h) \geq r_1$. Similarly, one can show that
$\lambda_d(\sym \mathcal{H}_d  v_h) \leq R_1$ for some constant $R_1 >0$ independent of $h$. In summary
\begin{equation*} 
\text{for} \ |v_h-r_h(u)|_{1,h} \leq C h^{\frac{d}{2} + 1}, r_1 \leq \lambda_1(\sym \mathcal{H}_d  v_h) \leq \lambda_d(\sym \mathcal{H}_d  v_h) \leq R_1.
\end{equation*}
In addition, by Lemma \ref{lem-1}, there exists positive constants $r', R'$ independent of $h$ such that
\begin{align} \label{contrac-p0}
r' \leq \lambda_1(\cof \sym \mathcal{H}_d v_{h}) \leq \lambda_d (\cof \sym \mathcal{H}_d v_{h})  \leq R'.
\end{align}
Put 
$$
\nu = (r'+R')/(2 d).
$$

\begin{rem} \label{C0rem}
The constant $C_0$ is up to a constant a lower bound of $\lambda_1(D^2 u)$. For each compact subset $K$ of $\Omega$ we have by \eqref{cont-eig} with $B=0$ 
$$
\inf_{x \in K} |\lambda_1(D^2 u)(x)| \leq C ||u||_{C^2(K)}.
$$

\end{rem}

 We define a mapping $T_h: \mathcal{M}(\Omega_h) \to \mathcal{M}(\Omega_h)$ characterized by
\begin{align*}
-\nu \Delta_h T_h(\alpha v_h) & = -\nu \Delta_h \alpha v_h +\alpha^d\bigg( \frac{1}{d} \div_h [ (\cof \sym \mathcal{H}_d v_{h} ) D_h v_{h} ]  - r_h(f) \bigg) \\
T_h(\alpha v_h) & = \alpha r_h(\tilde{g})  \, \text{on} \, \partial \Omega_h.
\end{align*}
Since $T_h(\alpha v_h)$ solves a discrete Poisson equation, it is well defined.
\begin{rem} \label{fixed-rem}
If $\alpha u_h$ is a fixed point  of $T_h$, then $u_h$ solves \eqref{m0h}.
\end{rem}

Next, we estimate the amount by which the mapping $T_h$ moves the center $\alpha r_h(u)$ of $\alpha B_{\rho}(r_h u)$.
\begin{lemma} \label{mov-ball}
We have
$$
|T_h(\alpha r_h(u) )- \alpha r_h(u) |_{1,h} \leq \frac{C_1}{\nu} \alpha^d h.
$$
\end{lemma}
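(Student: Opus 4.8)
The plan is to estimate $T_h(\alpha r_h(u)) - \alpha r_h(u)$ by exploiting the fact that the exact solution $u$ is smooth and satisfies the continuous equation $\det D^2 u = f$, together with the consistency of the compatible discretization proved in Lemma \ref{det-ap-lem}. Writing $w_h = T_h(\alpha r_h(u))$, the defining relation of $T_h$ gives
\begin{align*}
-\nu \Delta_h (w_h - \alpha r_h(u)) = \alpha^d \bigg( \frac{1}{d} \div_h [ (\cof \sym \mathcal{H}_d (r_h u) ) D_h (r_h u) ] - r_h(f) \bigg) \, \text{in} \, \Omega_h^0,
\end{align*}
while $w_h - \alpha r_h(u) = 0$ on $\partial \Omega_h$ since both $w_h$ and $\alpha r_h(u)$ equal $\alpha r_h(\tilde g)$ there (recall $u = \tilde g$ on $\partial\Omega$). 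So $\phi_h := w_h - \alpha r_h(u) \in H_0^1(\Omega_h)$.

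First I would test this identity against $\phi_h$ itself in the discrete $L^2$ inner product. Using \eqref{weight} (applied to $-\Delta_h = -\div_h D_h$), the left-hand side becomes $\nu |\phi_h|_{1,h}^2$. For the right-hand side, I would use \eqref{ap04} of Lemma \ref{det-ap-lem}, which states that the compatible discrete operator applied to $r_h u$ differs from $\det D^2 u$ by $O(h)$ pointwise on interior mesh points; since $\det D^2 u = f$ and $r_h(f)$ samples $f$ exactly, the bracketed quantity is $O(h)$ uniformly in $\Omega_h^0$, with the constant controlled by norms of $u$ (hidden in the generic $C$). Then by the Cauchy-Schwarz estimate \eqref{schwarz}, $|\<(\text{RHS bracket}), \phi_h\>| \leq C h \, \|\phi_h\|_{0,h}$, up to the factor $\alpha^d$. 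Combining, $\nu |\phi_h|_{1,h}^2 \leq C \alpha^d h \, \|\phi_h\|_{0,h}$, and the discrete Poincaré inequality (Lemma \ref{poincare}) gives $\|\phi_h\|_{0,h} \leq C_p^{-1} |\phi_h|_{1,h}$, so dividing through yields $|\phi_h|_{1,h} \leq \frac{C_1}{\nu} \alpha^d h$ for a suitable constant $C_1$ absorbing $C$ and $C_p$. This is exactly the claimed bound.

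I expect the only genuine subtlety to be bookkeeping near the boundary layer: one must check that the consistency estimate \eqref{ap04} is valid at every point of $\Omega_h^0$ (which is where the equation is imposed and where the stencil $x \pm he_i \pm he_j$ stays inside $\tir\Omega$, precisely the definition of $\Omega_h^0$), so no points are lost and the $O(h)$ term is uniform. One should also note that the implicit constant in \eqref{ap04} depends on $\|u\|_{C^3(\tir\Omega)}$ (third derivatives enter the Taylor remainders for the off-diagonal entries of $\mathcal{H}_d$), which is finite by the standing hypothesis $u \in C^4(\tir\Omega)$; this is the source of the dependence of $C_1$ on the solution. Everything else — the $H_0^1$ membership of $\phi_h$, the summation-by-parts identity \eqref{weight}, Poincaré, and Cauchy-Schwarz — is routine and already set up in the preliminaries. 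Thus the proof reduces to the three-line chain: energy identity $\Rightarrow$ consistency $+$ Cauchy-Schwarz $\Rightarrow$ Poincaré $\Rightarrow$ divide.
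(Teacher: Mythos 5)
Your proposal is correct and follows essentially the same route as the paper: form the residual equation for $T_h(\alpha r_h u)-\alpha r_h(u)\in H_0^1(\Omega_h)$, test against it using \eqref{weight}, bound the right-hand side via the consistency estimate of Lemma \ref{det-ap-lem} together with \eqref{schwarz} and the discrete Poincar\'e inequality, and divide by $|\cdot|_{1,h}$. Your version is in fact slightly cleaner than the paper's, which cites \eqref{ap03} where \eqref{ap04} (the $\sym$ variant you use) is the appropriate estimate and leaves the Poincar\'e step implicit.
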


\begin{proof}
We have
\begin{align} \label{mov-ball-p1}
- \Delta_h(T_h(\alpha r_h u) - r_h(u) ) & = \frac{\alpha^d}{ \nu} \bigg(  \frac{1}{d} \div_h [ (\cof \sym \mathcal{H}_d v_{h} ) D_h v_{h} ]
- r_h (\det D^2 u)
\bigg). 
\end{align}
Let $z_h = T_h(\alpha r_h u) - r_h(u) \in H_0^1(\Omega_h)$. Taking the inner product of \eqref{mov-ball-p1} with $z_h$ and using \eqref{weight} and \eqref{schwarz} we obtain
\begin{align*}
|z_h|_{1,h}^2 \leq \frac{C}{\nu} \alpha^d \bigg|\frac{1}{d} \div_h [ (\cof \sym \mathcal{H}_d v_{h} ) D_h v_{h} ]
- r_h (\det D^2 u) \bigg|_{0,\infty,h} |z_h|_{1,h}.
\end{align*}
By  \eqref{ap03}, we get
$$
 |z_h|_{1,h} \leq \frac{C}{\nu} \alpha^d h.
$$
\end{proof}

We now give a contraction property for $T_h$.
\begin{lemma} \label{con-lem}
For $h$ sufficiently small,  $\alpha=h^{(3+d/2)/(d-1)}$ and $\rho \leq C_0 h^{d/2+1}$, $T_h$ 
is a strict contraction mapping in the ball
$\alpha B_h(\rho)$, 
i.e. for $v_h, w_h \in  B_h(\rho)$
$$
|T_h(\alpha v_h)-T_h(\alpha w_h)|_{1,h} \leq a |\alpha v_h-\alpha w_h|_{1,h}, 0<a< 1.
$$
The constant $a$ takes the form $\beta+ C h (\rho +C)^{d-1}$ for $0<\beta < 1$.
\end{lemma}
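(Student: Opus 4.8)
The plan is to prove that $T_h$ is a strict contraction on the rescaled ball $\alpha B_h(\rho)$ by estimating the difference $T_h(\alpha v_h) - T_h(\alpha w_h)$ in the $|\cdot|_{1,h}$ seminorm. First I would subtract the two defining equations for $T_h$: writing $z_h = T_h(\alpha v_h) - T_h(\alpha w_h) \in H_0^1(\Omega_h)$, one gets
\begin{align*}
-\nu \Delta_h z_h &= -\nu \Delta_h (\alpha v_h - \alpha w_h) \\
&\quad + \frac{\alpha^d}{d}\Big( \div_h[(\cof \sym \mathcal{H}_d v_h) D_h v_h] - \div_h[(\cof \sym \mathcal{H}_d w_h) D_h w_h] \Big).
\end{align*}
Taking the inner product with $z_h$, using the integration-by-parts identity \eqref{weight} (so $-\langle \Delta_h z_h, z_h\rangle = |z_h|_{1,h}^2$) and the Cauchy--Schwarz/inverse estimate \eqref{schwarz}, I would bound $|z_h|_{1,h}$ by $\alpha |v_h - w_h|_{1,h}$ times a factor close to $1$ plus a term coming from the nonlinear cofactor difference. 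The key is to split the nonlinear difference
$$
(\cof \sym \mathcal{H}_d v_h) D_h v_h - (\cof \sym \mathcal{H}_d w_h) D_h w_h = (\cof \sym \mathcal{H}_d v_h)D_h(v_h - w_h) + \big(\cof \sym \mathcal{H}_d v_h - \cof \sym \mathcal{H}_d w_h\big) D_h w_h.
$$

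For the first piece, the choice $\nu = (r' + R')/(2d)$ together with the eigenvalue bounds \eqref{contrac-p0} (valid since $v_h, w_h \in B_h(\rho) \subset K_0(\Omega_h)$ by \eqref{const-c0}) should make $\frac{1}{d}\cof\sym\mathcal{H}_d v_h$ a matrix whose eigenvalues lie in $[r'/d, R'/d]$, so that $-\nu\Delta_h + \frac{1}{d}\div_h[(\cof\sym\mathcal{H}_d v_h)D_h\,\cdot\,]$ is a small perturbation of a coercive operator and contributes a factor $\beta < 1$; this is exactly the mechanism of the rescaling argument in \cite{Awanou-Std01}. For the second piece, I would use the Lipschitz dependence of $\cof$ on its entries (since $\sym \mathcal{H}_d w_h$ has eigenvalues bounded above and below uniformly, the cofactor map is Lipschitz there) together with the eigenvalue continuity estimate \eqref{continuity-eig} to get $|\cof\sym\mathcal{H}_d v_h - \cof\sym\mathcal{H}_d w_h|_{0,\infty,h} \leq C|v_h - w_h|_{2,\infty,h}$, then convert via the inverse estimate \eqref{inverse1} and absorb the factor $h^{-d/2-1}$ and the extra power $\alpha^{d-1}$ from $\alpha^d/\alpha$; the factor $D_h w_h$ is controlled since $w_h$ is within $\rho$ of $r_h u$, giving a bound of the form $|D_h w_h|_{\infty} \leq \rho + C$. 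Collecting terms and dividing by $\nu$, the Lipschitz constant comes out as $a = \beta + Ch(\rho + C)^{d-1}$, and the precise scaling $\alpha = h^{(3+d/2)/(d-1)}$ is chosen so that $\alpha^{d-1} \cdot h^{-d/2-1} = h^2$, making the perturbation term $O(h)$ and hence $a < 1$ for $h$ small.

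The main obstacle will be tracking the powers of $h$ and $\alpha$ through the inverse estimates so that the nonlinear (quadratic-type) term is genuinely a small perturbation: one must verify that after extracting $\alpha^d$ from the nonlinear terms, dividing by $\alpha$ to compare with $\alpha|v_h - w_h|_{1,h}$, and paying the inverse-estimate price $C_{inv}(h) = Ch^{-d/2-1}$ to pass from $|\cdot|_{1,h}$ to $|\cdot|_{2,\infty,h}$, the residual factor $\alpha^{d-1} h^{-d/2-1}$ tends to zero — which forces precisely the exponent $(3+d/2)/(d-1)$. A secondary technical point is confirming that the cofactor map restricted to the relevant set of symmetric matrices (those with spectrum in $[r_1, R_1]$) is Lipschitz with an $h$-independent constant, which follows from Lemma \ref{lem-1} and the smoothness of $\cof$ away from singular matrices; I would state this as an elementary consequence rather than prove it in detail.
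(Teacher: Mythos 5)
Your proposal follows essentially the same route as the paper's proof: the same splitting of the nonlinear difference into $(\cof\sym\mathcal{H}_d v_h)D_h(v_h-w_h)$ plus a cofactor-difference term, the same use of the eigenvalue bounds \eqref{contrac-p0} with $\nu=(r'+R')/(2d)$ to get the operator-norm factor $\beta<1$ (the paper makes this precise via a polarization identity for the associated bilinear form), and the same mean-value/Lipschitz estimate for $\cof$ combined with the inverse estimate, with the exponent of $\alpha$ chosen exactly so that $\alpha^{d-1}h^{-d/2-1}=h^{2}$. The power counting and the resulting form of $a$ match the paper, so the argument is correct as outlined.
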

\begin{proof}
For a matrix field $A_h$, we define
$$
|A_h|_{0,\infty,h} = \max_{i,j=1,\ldots,d} |(A_h)_{ij}|_{0,\infty,h}.
$$
Let us denote by $ \cof' $ the Fr\'echet derivative of the mapping $A \to  \cof A $. Since $\cof'(A) (B)$ is a sum of terms each of which is a product of $d-2$ entries from $A$ and is linear in $B$, we have
for $ t \in \R$
\begin{align} \label{meanv-cof}
\begin{split}
|\cof'(t \sym \mathcal{H}_d v_{h} +& (1-t)  \sym \mathcal{H}_d w_{h} )  ( \sym \mathcal{H}_d w_{h} -  \sym \mathcal{H}_d w_{h} )  |_{0,\infty,h} \\
&\qquad \qquad \qquad \leq C |t v_h + (1-t) w_h|_{2,\infty,h}^{d-2} |v_h-w_h|_{2,\infty,h}.
\end{split}
\end{align}
For $v_h=(v_{i,h})_{i=1,\ldots,d}, w_h=(w_{i,h})_{i=1,\ldots,d} \in \mathcal{M}(\Omega_h)^d$, we define
$$
\< v_h, w_h \> = \sum_{i=1}^d \<v_{i,h}, w_{i,h} \>.
$$
We have
\begin{align} \label{contrac-p1}
\begin{split}
-\Delta_h (T_h(\alpha v_h) & -T_h(\alpha w_h))  = -\Delta_h \alpha (v_h - w_h ) \\
& \quad + \frac{\alpha^d}{d \nu} \div_h \bigg\{ 
(\cof \sym \mathcal{H}_d v_{h} ) D_h v_{h} - (\cof \sym \mathcal{H}_d w_{h} ) D_h w_{h}
\bigg\}.
\end{split}
\end{align}
On the other hand
\begin{align} \label{contrac-p2}
\begin{split}
(\cof \sym \mathcal{H}_d v_{h} ) & D_h v_{h} -  (\cof \sym \mathcal{H}_d w_{h} ) D_h w_{h}  = (\cof \sym \mathcal{H}_d v_{h} ) \\ 
& \qquad  (D_h v_{h} - D_h w_{h}) 
+ ( \cof \sym \mathcal{H}_d v_{h} - \cof \sym \mathcal{H}_d w_{h} ) D_h w_h.
\end{split}
\end{align}
Let $z_h = T_h(\alpha v_h)-T_h(\alpha w_h) \in H_0^1(\Omega_h)$. Taking the inner product of \eqref{contrac-p1} with $z_h$ and using \eqref{weight} and \eqref{contrac-p2} we obtain
\begin{align} \label{contrac-p3}
\begin{split}
|z_h|_{1,h}^2 & = \< D_h  \alpha (v_h - w_h ), D_h z_h \>  - \frac{\alpha^d}{d \nu} \< (\cof \sym \mathcal{H}_d v_{h} ) (D_h v_{h} - D_h w_{h}), D_h z_h \> \\
& \qquad - \frac{\alpha^d}{d \nu} \<    ( \cof \sym \mathcal{H}_d v_{h} - \cof \sym \mathcal{H}_d w_{h} ) D_h w_h, D_h z_h \> \\
& = \alpha \<(I - \frac{1}{ d\nu} (\cof \alpha \sym \mathcal{H}_d v_{h} ) ) D_h  (v_h - w_h ), D_h z_h \>  \\
& \qquad \qquad - \frac{\alpha^d}{d \nu} \<    ( \cof \sym \mathcal{H}_d v_{h} - \cof \sym \mathcal{H}_d w_{h} ) D_h w_h, D_h z_h \>,
\end{split}
\end{align}
where $I$ denotes the $d \times d$ identity matrix.

Since $\rho \leq C_0 h^{d/2+1}$, by \eqref{contrac-p0} for $v_h \in B_{\rho}(r_h u)$, we have for $z_h \in \mathcal{M}(\Omega_h)$
\begin{align} \label{ess-step}
r' \alpha^{d-1} |z_h|_{1,h}^2 \leq  \< ( \cof \alpha \sym \mathcal{H}_d v_{h} ) D_h z_h, D_h z_h \> \leq  R' \alpha^{d-1} |z_h|_{1,h}^2. 
\end{align}
Therefore
\begin{align*}
(1- \frac{R' \alpha^{d-1} }{ d\nu}) |z_h|_{1,h}^2 \leq  \< (I - \frac{1}{ d\nu}  (\cof \alpha \sym \mathcal{H}_d v_{h} ) )D_h z_h, D_h z_h \> \leq  (1- \frac{r' \alpha^{d-1}}{ d\nu})  |z_h|_{1,h}^2. 
\end{align*}
We define
$$
\beta = \sup_{z_h \in \mathcal{M}(\Omega_h) \atop |z_h|_{1,h} = 1}  \<(I - \frac{1}{ d\nu}  (\cof \alpha \sym \mathcal{H}_d v_{h} ) )D_h z_h, D_h z_h \>.
$$
Since $\nu =(R'+r')/(2d)$, we have
\begin{align*}
1- \frac{\alpha^{d-1} R'}{ d \nu} & = \frac{r'+R' - 2 R' \alpha^{d-1}}{r'+R'} < 1 \\
1- \frac{\alpha^{d-1} r'}{ d\nu} & =  \frac{r'+R' - 2 r' \alpha^{d-1}}{r'+R'} < 1.
\end{align*}
Thus since for $h$ sufficiently small 
\begin{equation} \label{last-eq1}
\alpha^{d-1} < \frac{r'+R'}{2 R'} \leq \frac{r'+R'}{2 r'},
\end{equation}
we have
$$
0 \leq \beta < 1.
$$

Define $p_h= w_h/|w_h|_{1,h}$ and  $q_h= z_h/|z_h|_{1,h}$ for $w_h \neq 0$ and $v_h \neq 0$. Then 
\begin{align} \label{pp1}
\frac{\<(I - \frac{1}{ d\nu}  (\cof \alpha \sym \mathcal{H}_d v_{h} ) )D_h w_h, D_h z_h \> }{|w_h|_{1,h} |z_h|_{1,h}}  =  \<(I - \frac{1}{ d\nu}  (\cof \alpha \sym \mathcal{H}_d v_{h} ) )D_h p_h, D_h q_h \>.
\end{align}
We can define a bilinear form on $\mathcal{M}(\Omega_h) $ by the formula
\begin{align*}
(p,q) & =\<(I - \frac{1}{ d\nu}  (\cof \alpha \sym \mathcal{H}_d v_{h} ) )D_h p, D_h q \>.
\end{align*}
Then because
$$
(p,q) = \frac{1}{4} ((p+q,p+q) - (p-q,p-q)),
$$
and using the definition of $\beta$, we get
$$
|(p_h,q_h)| \leq \frac{\beta}{4} |p_h+q_h|_{1,h}^2 + \frac{\beta}{4}  |p_h-q_h|_{1,h}^2 = \beta,
$$
since $p_h$ and $q_h$ are unit vectors in the $| . |_{1,h}$ semi-norm. It follows from \eqref{pp1} that for $w_h, z_h \in \mathcal{M}(\Omega_h)$
\begin{equation} \label{pp2}
| \<(I - \frac{1}{ d\nu}  (\cof \alpha \sym \mathcal{H}_d v_{h} )^T )D_h w_h, D_h z_h \> | \leq \beta |w_h|_{1,h} |z_h|_{1,h}.
\end{equation}
We then conclude from \eqref{contrac-p3} that
\begin{align*}
|z_h|_{1,h}^2 \leq \beta |\alpha(v_h -w_h)|_{1,h} |z_h|_{1,h} +  \frac{\alpha^d}{d \nu}   | ( \cof \sym \mathcal{H}_d v_{h} - \cof \sym \mathcal{H}_d w_{h} ) |_{0,\infty,h}  |w_h|_{1,h}  |z_h|_{1,h}.
\end{align*}
By the mean value theorem and \eqref{meanv-cof}, we get
\begin{align*}
|z_h|_{1,h} \leq \beta |\alpha(v_h -w_h)|_{1,h}+ C  \frac{\alpha^d}{d \nu}   (|v_h|_{2,\infty,h}  + |w_h|_{2,\infty,h} )^{d-2}  |v_h-w_h|_{2,\infty,h} |w_h|_{1,h}.
\end{align*}
We note that by triangular inequality 
$$
|w_h|_{1,h} \leq |w_h-r_h(u)|_{1,h} + |r_h(u)|_{1,h} \leq \rho + C ||u||_{C^1(\tir{\Omega})} \leq \rho +C.
$$
Thus using \eqref{inverse1} 
we conclude that
\begin{align*}
|z_h|_{1,h}&  \leq \beta |\alpha(v_h -w_h)|_{1,h}+ \frac{C}{\nu} \alpha^{d} ( \rho +C)^{d-1}  |v_h-w_h|_{2,\infty,h}\\
&  \leq \beta |\alpha(v_h -w_h)|_{1,h}+ \frac{C}{\nu} \alpha^{d} ( \rho +C)^{d-1}  h^{-\frac{d}{2} -1} |v_h-w_h|_{1,h} \\
& =( \beta + \frac{C}{\nu} \alpha^{d-1} ( \rho +C)^{d-1}  h^{-\frac{d}{2} -1} ) |\alpha(v_h -w_h)|_{1,h}.
\end{align*}
Since $\beta <1$, with $\alpha=h^{(3+d/2)/(d-1)}$, we get $C \alpha^{d-1} ( \rho +C)^{d-1}  h^{-d/2 -1}/\nu < 1-\beta$ and $a= \beta + C \alpha^{d-1} ( \rho +C)^{d-1} h^{-d/2 -1}/\nu < 1$ 
for $h$ sufficiently small.

\end{proof}
\begin{lemma} \label{errorest0}
For $h$ sufficiently small, $\rho=  (C_0/2) \, h^{1+d/2} $ and $ \alpha=h^{(3+d/2)/(d-1)}$, 
$T_h$ 
is a strict contraction in $\alpha  B_{\rho}(r_h u)$ and maps $\alpha  B_{\rho}(r_h u)$ into itself.
\end{lemma}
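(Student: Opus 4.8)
The plan is to obtain both assertions as consequences of the two preceding lemmas---Lemma~\ref{con-lem} (contraction) and Lemma~\ref{mov-ball} (displacement of the center)---together with a triangle inequality and a comparison of powers of $h$; no new estimate is needed.

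First I would dispose of the contraction property. Since $\rho = (C_0/2)\,h^{1+d/2} \le C_0 h^{d/2+1}$, the hypothesis of Lemma~\ref{con-lem} is satisfied with the stated $\alpha = h^{(3+d/2)/(d-1)}$ (for which $\alpha^{d-1} = h^{3+d/2}$ is indeed $< (r'+R')/(2R')$ once $h$ is small, as required in that lemma), so $T_h$ is a strict contraction on $\alpha B_{\rho}(r_h u)$ with constant $a = \beta + Ch(\rho+C)^{d-1}$, where $0 < \beta < 1$ is independent of $h$. I would record here the quantitative bound $1 - a \ge (1-\beta)/2 > 0$ valid for $h$ small: indeed $\rho \to 0$ as $h \to 0$, so $(\rho+C)^{d-1}$ stays bounded and $a \to \beta$.

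Next, for the invariance of the ball, I would take $z_h = \alpha v_h$ with $v_h \in B_{\rho}(r_h u)$, so that $|z_h - \alpha r_h u|_{1,h} \le \alpha\rho$, and estimate
\begin{align*}
|T_h(z_h) - \alpha r_h u|_{1,h}
 &\le |T_h(\alpha v_h) - T_h(\alpha r_h u)|_{1,h} + |T_h(\alpha r_h u) - \alpha r_h u|_{1,h} \\
 &\le a\,|\alpha v_h - \alpha r_h u|_{1,h} + \frac{C_1}{\nu}\,\alpha^d h
 \le a\,\alpha\rho + \frac{C_1}{\nu}\,\alpha^d h ,
\end{align*}
using Lemma~\ref{con-lem} and Lemma~\ref{mov-ball} respectively. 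It then remains to check that $\frac{C_1}{\nu}\,\alpha^d h \le (1-a)\,\alpha\rho$, i.e.\ $\frac{C_1}{\nu}\,\alpha^{d-1} h \le (1-a)\,\rho$. The left-hand side equals a fixed multiple of $h^{4+d/2}$, whereas by the previous step $(1-a)\rho \ge \frac{1-\beta}{2}\cdot\frac{C_0}{2}\,h^{1+d/2}$, and since $4+d/2 > 1+d/2$ the inequality holds for all sufficiently small $h$, which finishes the argument.

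I expect the only delicate point to be the exponent bookkeeping: the displacement of the center scales like $\alpha^d h$ and the radius like $\alpha\rho$, so one needs $\alpha^{d-1} h \ll \rho$, and the particular choices $\rho = (C_0/2)h^{1+d/2}$ and $\alpha = h^{(3+d/2)/(d-1)}$ are calibrated precisely so that this holds with a large margin (while simultaneously keeping $\rho \le C_0 h^{d/2+1}$ so that Lemma~\ref{cone} applies and $\alpha^{d-1}$ small enough for Lemma~\ref{con-lem}). Once the lemma is in hand, the Banach fixed point theorem furnishes a unique fixed point $\alpha u_h$ of $T_h$ in $\alpha B_{\rho}(r_h u)$; by Remark~\ref{fixed-rem} the corresponding $u_h$ solves \eqref{m0h}, and $u_h \in B_{\rho}(r_h u) \subset K_0(\Omega_h)$ is discrete strictly convex.
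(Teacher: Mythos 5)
Your proof is correct and follows essentially the same route as the paper's: apply Lemma~\ref{con-lem} for the contraction constant $a=\beta+Ch(\rho+C)^{d-1}$, then combine it with Lemma~\ref{mov-ball} via the triangle inequality and check that $\frac{C_1}{\nu}\alpha^{d}h\le(1-a)\alpha\rho$, which reduces to comparing $h^{4+d/2}$ against $h^{1+d/2}$ exactly as in the paper (where the ratio is written as the factor $\frac{2C_1}{\nu C_0}h^{3}$). Your explicit lower bound $1-a\ge(1-\beta)/2$ is a harmless quantitative refinement of the paper's ``$Ch+\beta<1$ for $h$ small.''
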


\begin{proof}
We have $\rho \leq C_0  h^{1+d/2}$ for $h \leq 1$. 
Moreover
\begin{align*}
\frac{C_1}{\nu} \alpha^d  h & = \frac{C_1}{\nu} h^{3+\frac{d}{2}} \alpha h = \frac{C_1}{\nu} h^{4+\frac{d}{2}}  \alpha =\frac{2 C_1}{\nu C_0} h^3  \alpha \rho.
\end{align*}
Thus using the expression of $a$ in Lemma \ref{con-lem}, we get 
$$
\frac{2 C_1}{\nu C_0} h^3 + a = \frac{2 C_1}{\nu C_0} h^3 + \beta+ C h (\rho +C)^{d-1} \leq C h + \beta.
$$
Since $\beta <1$, we have for $h$ sufficiently small 
$C_1 \alpha^d  h /\nu \leq (1-a)  \alpha \rho$ . 
Now, let $v_h \in B_{\rho}(r_h u)$. Then by Lemmas \ref{con-lem} and \ref{mov-ball}
\begin{align*}
|T_h(\alpha v_h) - \alpha r_h u |_{1,h} & \leq |T_h(\alpha v_h) - T_h(\alpha r_h u)|_{1,h} +  |T_h(\alpha r_h u)-\alpha r_h u|_{1,h} \\
& \leq a |\alpha v_h - \alpha r_h u |_{1,h} + \frac{ C_1\alpha^d}{\nu}  h\\
& \leq a |\alpha v_h - \alpha r_h u |_{1,h} + (1-a)  \alpha \rho\\
& \leq a \alpha \rho + (1-a)    \alpha \rho \\
& \leq \alpha \rho,
\end{align*}
and we conclude that
$$
|T_h(\alpha v_h) - \alpha r_h u |_{1,h} \leq \alpha \rho.
$$
This proves the result.
\end{proof}

We can now state the main result of this section
\begin{thm} \label{errorest}
Let $u \in C^4(\tir{\Omega})$ be a  strictly convex solution of \eqref{m1}. For $h$ sufficiently small, the discrete Monge-Amp\`ere equation \eqref{m0h}  has a unique discrete strictly convex solution $u_h \in B_{\rho}(r_h u)$ with $$\rho= \frac{C_0}{2} h^{ 1+\frac{d}{2} }, 
$$ 
and
\begin{align} \label{err-estimate}
  |r_h(u)-u_h|_{1,h} &\leq  \frac{C_0}{2} h^{1+\frac{d}{2}},
\end{align} 
Moreover, with a sufficiently close initial guess $u_h^0$, the sequence defined by $u_h^{k+1}=r_h(\tilde{g})$ on $\partial \Omega_h$
\begin{align}  \label{time-marching-0}
\begin{split}
- \frac{\nu}{\alpha^{d-1}} \Delta_d u_h^{k+1} & = - \frac{\nu}{\alpha^{d-1}} \Delta_d u_h^{k} + \frac{1}{d} \div_h [ (\cof \sym \mathcal{H}_d u_{h} ) D_h u_{h}^k ]  -r_h(f) \, \text{in} \, \Omega_h, 
\end{split}
\end{align}
 converges linearly to $u_h$ in the $H^1(\Omega_h)$ norm for $\nu=(R'+r')/(2 d)$ and $\alpha=h^{(3+d/2)/(d-1)}$. 
\end{thm}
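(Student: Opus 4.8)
The plan is to obtain Theorem~\ref{errorest} as a straightforward application of Banach's fixed point theorem, since the analytic work is already contained in Lemmas~\ref{con-lem} and~\ref{errorest0}, followed by a bookkeeping step identifying the time marching iteration \eqref{time-marching-0} with the Picard iteration of $T_h$ on the rescaled variable.

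Fix $\rho = (C_0/2)\,h^{1+d/2}$ and $\alpha = h^{(3+d/2)/(d-1)}$. Because $u=g=\tilde g$ on $\partial\Omega$, the center $\alpha r_h(u)$ of $\alpha B_\rho(r_h u)$ carries the boundary data $\alpha r_h(\tilde g)$ on $\partial\Omega_h$, and $T_h$ maps into grid functions with that boundary data; on the affine space of such grid functions the seminorm $|\cdot|_{1,h}$ is a genuine metric by Lemma~\ref{poincare}, so $\alpha B_\rho(r_h u)$ (a closed bounded subset of a finite-dimensional space) is a complete metric space. By Lemma~\ref{errorest0}, for $h$ sufficiently small $T_h$ maps $\alpha B_\rho(r_h u)$ into itself and is a strict contraction there, with factor $a<1$ from Lemma~\ref{con-lem}. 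Banach's theorem then yields a unique fixed point $\alpha u_h \in \alpha B_\rho(r_h u)$. By Remark~\ref{fixed-rem}, $u_h$ solves \eqref{m0h}; from $\alpha u_h\in\alpha B_\rho(r_h u)$ one reads off $|r_h(u)-u_h|_{1,h}\le\rho=(C_0/2)h^{1+d/2}$, which is \eqref{err-estimate}; since $\rho\le C_0 h^{1+d/2}$, the bound \eqref{const-c0} and Lemma~\ref{cone} give $u_h\in K_0(\Omega_h)$, i.e.\ $u_h$ is discrete strictly convex; and uniqueness of $u_h$ in $B_\rho(r_h u)$ follows from uniqueness of the fixed point, since any solution of \eqref{m0h} in $B_\rho(r_h u)$ scales to a fixed point of $T_h$ in $\alpha B_\rho(r_h u)$.

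For the last assertion, I would multiply \eqref{time-marching-0} through by $\alpha^d$ and use the linearity of $\Delta_h$: the resulting relation, together with the boundary condition $u_h^{k+1}=r_h(\tilde g)$ on $\partial\Omega_h$ and the unique solvability of the discrete Poisson problem, is exactly the pair of relations defining $T_h$ evaluated at $\alpha u_h^k$, so $\alpha u_h^{k+1}=T_h(\alpha u_h^k)$. Interpreting ``sufficiently close initial guess'' as $u_h^0\in B_\rho(r_h u)$, we get $\alpha u_h^0\in\alpha B_\rho(r_h u)$, and the contraction estimate gives $|\alpha u_h^k-\alpha u_h|_{1,h}\le a^k\,|\alpha u_h^0-\alpha u_h|_{1,h}$; dividing by $\alpha$ yields $|u_h^k-u_h|_{1,h}\le a^k\,|u_h^0-u_h|_{1,h}$. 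Since $u_h^k-u_h\in H_0^1(\Omega_h)$, Lemma~\ref{poincare} upgrades this to $\|u_h^k-u_h\|_{1,h}\le C a^k\,|u_h^0-u_h|_{1,h}$, i.e.\ linear convergence in the $H^1(\Omega_h)$ norm.

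I expect the only genuine point of care to be the rescaling bookkeeping: checking that the particular coefficient $\nu/\alpha^{d-1}$ in \eqref{time-marching-0} is precisely what makes the factor $\alpha^d$ in front of the nonlinear term cancel so that the iteration really is $\alpha u_h^{k+1}=T_h(\alpha u_h^k)$, and being explicit that the linear rate is the contraction constant $a$ of Lemma~\ref{con-lem} — which is $<1$ for each fixed small $h$ but tends to $1$ as $h\to0$, so the convergence is not uniform in $h$. Everything else is a direct invocation of estimates already established.
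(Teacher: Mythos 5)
Your proposal is correct and follows essentially the same route as the paper: the existence, uniqueness and error estimate are obtained by applying the Banach fixed point theorem to $T_h$ on $\alpha B_{\rho}(r_h u)$ via Lemmas~\ref{con-lem} and~\ref{errorest0} together with Remark~\ref{fixed-rem}, and the time marching scheme \eqref{time-marching-0} is identified with the Picard iteration $\alpha u_h^{k+1}=T_h(\alpha u_h^k)$. Your added care about completeness of the ball, the strict convexity of $u_h$ via Lemma~\ref{cone}, and the $h$-dependence of the contraction factor $a$ are all consistent with, and slightly more explicit than, the paper's argument.
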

\begin{proof}
Since the mapping $T_h$ is a strict contraction which maps $\alpha  B_{\rho}(r_h u)$ into itself, the existence of a fixed point follows from the Banach fixed point theorem. By Remark \ref{fixed-rem}, a fixed point of $T_h$ solves \eqref{m0h}. Moreover the sequence defined by $\alpha u_h^{k+1} = T_h(\alpha u_h^{k})$ converges linearly to $u_h$ for  a sufficiently close initial guess $u_h^0$. This gives \eqref{time-marching-0}.
Finally the convergence rate follows from the expression of $\rho$.
This completes the proof.
 
\end{proof}

\begin{rem} \label{rho23}
Lemma \ref{errorest0} and Theorem \ref{errorest} also hold with $\rho=O(h^{2+d/2} )$ and $\alpha=h^{(3+d/2)/(d-1)}$.
\end{rem}

\begin{rem} \label{quad-cvg}
We have an asymptotic convergence rate in the maximum norm, i.e. using \eqref{inverse0-inf-0} and the discrete Poincare's inequality, we obtain
\begin{align*}
|r_h(u_{}) - u_{h}|_{0,\infty,h} & \le h^{-\frac{d}{2}} ||r_h(u_{}) - u_{h}||_{0,h} \\
& \le C h^{-\frac{d}{2}} |r_h(u_{}) - u_{h}|_{1,h} \leq C h^2, 
\end{align*}
where we used Remark \ref{rho23}. 
Essentially, what is proven in this paper, is that the numerical solution $u_h$ is very close to the interpolant $r_h(u)$. 
\end{rem}

\begin{rem} \label{vip}
The constant $C_{v_h}$ in Lemma \ref{cone} scales linearly with the size of $v_h$. As a consequence, the constant $C_0$ defined in \eqref{const-c0} also scales linearly with the size of $u$. The same thus holds for our error estimate \eqref{err-estimate}. As a practical consequence, if $\delta >0$ is a user's measure of how close an initial guess can be, i.e.  
$|v_h-v_h^0|_{1,h} \leq \delta$ where $v_h$ is the solution of the numerical problem and $v_h^0$ the guess, 
one only needs to solve the rescaled equation
$$
\frac{1}{d} \div_h [ (\cof \sym \mathcal{H}_d(u_{h}) ) D_h \beta u_{h} ] = \beta^d r_h(f)  \, \text{in} \, \Omega_{h}^0, u_{h} =r_h(g) \, \text{on} \, \partial \Omega_{h},
$$
where $\beta$ solves $\beta (C_0/2) h^{1+d/2} = \delta$. Thus essentially, the convergence of the time marching method is independent of the choice of an initial guess. Similar arguments are given in \cite{Awanou-k-Hessian12} for the case when the discretization \eqref{m1h-alt} is used.
 
\end{rem}

\begin{rem}
For the implementation of the compatible discretization introduced in this paper, quadratic extrapolation has to be used to set values which are not defined on the grid.
\end{rem}

We now show that the framework above can be extended to the situation where it is known that the discrete problem has a discrete strictly convex solution.

\begin{lemma} \label{last-lem}
Assume that \eqref{m1h} has a discrete strictly convex solution. Then the solution is unique locally and can be computed by the time marching method \eqref{time-marching3}.
\end{lemma}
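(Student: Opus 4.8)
The plan is to rerun the fixed-point construction of Section~\ref{time1} with the given discrete strictly convex solution $u_h$ of \eqref{m1h} playing the role previously played by the interpolant $r_h(u)$. Since $u_h$ is discrete strictly convex and $\Omega_h^0$ is finite, there is a constant $C_{u_h}>0$, depending on $h$ and on $u_h$, with $\lambda_1(\sym\mathcal{H}_d(u_h))\ge C_{u_h}$ on $\Omega_h^0$; this $h$-dependence is harmless here, since only local uniqueness and convergence of the iteration on a fixed mesh are asserted. By Lemma~\ref{cone} the ball $B=\{\,w_h\in\mathcal{M}(\Omega_h):\ |u_h-w_h|_{1,h}\le C_{u_h}/(2dC_{inv}(h))\,\}$ is contained in $K_0(\Omega_h)$, and, exactly as in the derivation following \eqref{const-c0}, the estimate \eqref{continuity-eig} combined with \eqref{inverse1} produces constants $r_1,R_1>0$ with $r_1\le\lambda_1(\sym\mathcal{H}_d v_h)\le\lambda_d(\sym\mathcal{H}_d v_h)\le R_1$ for every $v_h\in B$. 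Lemma~\ref{lem-1} then gives $r'\le\lambda_1(\cof\sym\mathcal{H}_d v_h)\le\lambda_d(\cof\sym\mathcal{H}_d v_h)\le R'$ on $B$, with $r'=r_1^d/R_1$ and $R'=R_1^d/r_1$.

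Next I would set $\nu=(r'+R')/(2d)$, introduce a rescaling parameter $\alpha>0$, and define $T_h\colon\mathcal{M}(\Omega_h)\to\mathcal{M}(\Omega_h)$ by the same discrete Poisson problem as in Section~\ref{time1}: $-\nu\Delta_h T_h(\alpha v_h)=-\nu\Delta_h\alpha v_h+\alpha^d\bigl(\frac1d\div_h[(\cof\sym\mathcal{H}_d v_h)D_h v_h]-r_h(f)\bigr)$ in $\Omega_h^0$, with $T_h(\alpha v_h)=\alpha r_h(g)$ on $\partial\Omega_h$. Because $v_h\in K_0(\Omega_h)$ implies $\alpha v_h\in K_0(\Omega_h)$, the rescaling does not disturb convexity, and the chain of estimates \eqref{ess-step}--\eqref{pp2} carries over with $B_\rho(r_h u)$ replaced by $B_\rho(u_h)$, the only bookkeeping change being that $|w_h|_{1,h}\le\rho+C$ now reads $|w_h|_{1,h}\le\rho+|u_h|_{1,h}$. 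Hence, for a ball radius $\rho$ not exceeding the Lemma~\ref{cone} radius and for $\alpha$ chosen small enough (as in Lemma~\ref{con-lem}, e.g. $\rho=O(h^{1+d/2})$ and $\alpha=h^{(3+d/2)/(d-1)}$), $T_h$ is a strict contraction on $\alpha B_\rho(u_h)$ in the $|\cdot|_{1,h}$ semi-norm.

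The point where the present situation is \emph{simpler} than in Section~\ref{time1} is that $u_h$ is an exact solution of \eqref{m1h}: by the computation behind Remark~\ref{fixed-rem}, $\alpha u_h$ is a genuine fixed point of $T_h$, so no analogue of Lemma~\ref{mov-ball} is needed and the strict contraction $T_h$ maps $\alpha B_\rho(u_h)$ into itself automatically. The Banach fixed-point theorem then shows that $\alpha u_h$ is the only fixed point of $T_h$ in $\alpha B_\rho(u_h)$, that is, $u_h$ is the unique solution of \eqref{m1h} in $B_\rho(u_h)$, and that the iteration $\alpha u_h^{k+1}=T_h(\alpha u_h^k)$ converges linearly to $\alpha u_h$ from any starting point in $\alpha B_\rho(u_h)$. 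Undoing the rescaling exactly as in the passage from $T_h$ to \eqref{time-marching-0} in Theorem~\ref{errorest}, this iteration is precisely \eqref{time-marching3} with $\tir{\nu}=\nu/\alpha^{d-1}$, a value which is large for small $h$ and therefore admissible. I expect the only delicate points to be bookkeeping ones: checking that the estimates of Section~\ref{time1} really do survive the replacement of the $h$-uniform bounds on $r_h(u)$ by the $h$-dependent bounds on $u_h$ (legitimate because the conclusion is purely local and mesh-fixed), and verifying that the $\tir{\nu}$ furnished by the rescaling lies in the admissible range of \eqref{time-marching3}.
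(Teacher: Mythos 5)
Your overall strategy coincides with the paper's: recentre the contraction argument of Section \ref{time1} at the given discrete strictly convex solution $u_h$, replace the eigenvalue bounds coming from $u$ by bounds $r_1\le\lambda_1(\sym\mathcal{H}_d u_h)\le\lambda_d(\sym\mathcal{H}_d u_h)\le R_1$ coming from $u_h$ itself (via Lemmas \ref{cone} and \ref{lem-1}), observe that $\alpha u_h$ is an exact fixed point of $T_h$ so that no analogue of Lemma \ref{mov-ball} is needed and invariance of the ball is automatic, and conclude local uniqueness together with linear convergence of the iteration, which after undoing the rescaling is \eqref{time-marching3}. All of this matches the paper's proof.

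The one place you diverge is precisely the step the paper's proof is devoted to: the choice of the rescaling parameter in the presence of the uncontrolled quantity $|u_h|_{1,h}$. You keep $\alpha=h^{(3+d/2)/(d-1)}$ and dismiss the issue because ``the mesh is fixed'', falling back on ``$\alpha$ small enough''. But $h$ is fixed by hypothesis, so the device ``for $h$ sufficiently small'' of Lemma \ref{con-lem} is not available, and the perturbation term in the contraction constant now reads $\frac{C}{\nu}\,\alpha^{d-1}(\rho+|u_h|_{1,h})^{d-1}h^{-d/2-1}$, with no guarantee that it lies below $1-\beta$ at the given $h$. Moreover, shrinking $\alpha$ does not by itself repair this: by the two-sided bound following \eqref{ess-step}, $1-R'\alpha^{d-1}/(d\nu)\le\beta\le 1-r'\alpha^{d-1}/(d\nu)$, so the margin $1-\beta$ degenerates like $\alpha^{d-1}$, i.e.\ at the same rate as the term you are trying to make small, and the comparison between the two is essentially independent of $\alpha$. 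The paper handles this by building the size of $u_h$ into the rescaling: it takes $\alpha$ as in \eqref{new-alpha}, $\alpha=(\rho+|u_h|_{1,h})^{-1}h^{3(3+d/2)/(d-1)}\nu^{1/(d-1)}$, so that $\frac{\alpha^{d-1}}{\nu}(\rho+|u_h|_{1,h})^{d-1}h^{-d/2-1}=h^{8+d}$ becomes a pure power of $h$ with constants independent of $u_h$, and it then re-verifies the constraint \eqref{last-eq1}, $\alpha^{d-1}<(r_1'+R_1')/(2r_1')$, for this new $\alpha$ (with $\nu$ now built from $r_1,R_1$). Your write-up should either adopt this modified $\alpha$ and redo those two checks, or supply an argument showing the old $\alpha$ still yields a contraction factor $a<1$ at the fixed $h$; as written, that step is a gap.
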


\begin{proof}
We indicate how the proofs are adapted.

Recall that $\rho = (C_0/2) h^{1+d/2}$, and now, by Lemma \ref{cone},  $C_0$ takes the form $C_0 = C r_1$ for a constant $C$ independent of $h$ where
$$
r_1 \leq \lambda_1(\mathcal{H}_d u_h(x)) \leq \lambda_1(\mathcal{H}_d u_h(x)) \leq R_1, \forall x \in \Omega^0_h.
$$
Recall that $\nu = (r_1'+R_1')/2$ with $r_1'=r_1^d/R_1$ and $R_1'=R_1^d/r_1$ by Lemma \ref{lem-1}.


We now take
\begin{equation} \label{new-alpha}
\alpha= \frac{ h^{\frac{3(3+\frac{d}{2})}{d-1}} }{\rho + |u_h|_{1,h}  } \nu^{\frac{1}{d-1}}.
\end{equation}
Then
\begin{align*}
\frac{\alpha^{d-1}}{\frac{r_1'+R_1'}{2 r_1'}} & = \frac{h^{3 (3+\frac{d}{2})} r_1'}{(\frac{C_0}{2} h^{1+\frac{d}{2}} + |u_h|_{1,h})^{d-1}}  \leq \frac{h^{3(3+\frac{d}{2})}  \frac{r_1^d}{R_1} }{C r_1^{d-1}  h^{(1+\frac{d}{2})(d-1)} }
= C \frac{r_1}{R_1} h^{10 + \frac{d}{2} (2-d)}.
\end{align*}
Thus $\alpha^{d-1} < (r_1'+R_1')/(2 r_1')$ for $h$ sufficiently small, i.e. \eqref{last-eq1} holds.

For the last step in the proof of Lemma \ref{con-lem}, we have
\begin{align*}
C \frac{\alpha^{d-1}}{\nu} (\rho + |u_h|_{1,h} )^{d-1} h^{-\frac{d}{2}-1} = C h^{8+d},
\end{align*}
and since $\beta <  1$ 
$$
a=C h^{8+d} + \beta <1,
$$
for $h$ sufficiently small. One then obtains that the mapping $T_h$ is a strict contraction in $\alpha B_{\rho}(u_h)$ with $T_h(\alpha  u_h)=\alpha  u_h$. This concludes the proof.

\end{proof}

\section{Numerical results} \label{num}

The computational domain is the unit square $[0,1]^2$. 
The initial guess for the iterations was taken as the finite difference approximation of the solution of $\Delta u = 2 \sqrt{f}$ with boundary condition $u=g$. Numerical errors are in the maximum norm. 

The scheme \eqref{m1h} performs well for the standard tests for strictly convex viscosity solutions of the Monge-Amp\`ere equation. The results are given on Tables \ref{tab1}, \ref{tab2} and Figure \ref{fig1}. 

 For the non smooth solution of Table \ref{tab2}, and $h=1/2^7, \tir{\nu}=850$, when we run the iterative method \eqref{time-marching} for 10000 iterations followed by \eqref{time-marching3} we get an error $5.0530 \ 10^{-4} $, a level of accuracy which had not been achieved before. And it only took  908 seconds or 15 minutes on a 2.5 GHz MacBook Pro.
Although the convergence of the iterative methods is (theoretically upon rescaling the equation) independent of the closeness of an initial guess, for the best results, one should use the second order accurate scheme \eqref{time-marching} to provide an initial guess for the compatible discretization \eqref{time-marching3}. In fact, \eqref{time-marching} works also in the degenerate case $f=0$ and $g(x,y)=|x-1/2|$, Figure \ref{fig2}. For smooth solutions, \eqref{time-marching} appears to be dramatically faster than Newton's method, Table \ref{tab3}.


\begin{table}
\begin{tabular}{c|ccccc} 
 \multicolumn{5}{c}{$h$}\\
$ \tir{\nu}$ & $1/2^2$ &  $1/2^3$ &  $1/2^4$&  $1/2^5$ &  $1/2^6$ \\ \hline
50 & 9.2277   $10^{-3}$ & 6.5555 $10^{-3}$ & 3.9964 $10^{-3}$ & 2.1694 $10^{-3}$ & 5.0688 $10^{-4}$ \\
\end{tabular}
\caption{Smooth solution $u(x,y)=e^{(x^2+y^2)/2}, g(x,y)=e^{(x^2+y^2)/2}$ and $f(x,y) =(1+x^2+y^2)e^{x^2+y^2}$ with the compatible discretization \eqref{time-marching3} }\label{tab1}
\end{table}
\begin{table}
\begin{tabular}{c|cccc} 
 \multicolumn{4}{c}{$h$}\\
$ \tir{\nu}$ & $1/2^3$ &  $1/2^4$ &  $1/2^5$&  $1/2^6$  \\ \hline
150 & 3.9140 $10^{-3}$ & 2.5847 $10^{-3}$ & 1.4879 $10^{-3}$ & 6.3084 $10^{-4}$  \\ 
\end{tabular} 
\caption{Non smooth solution (not in $H^2(\Omega)$) $u(x,y)=-\sqrt{2-x^2-y^2}, g(x,y)=-\sqrt{2-x^2-y^2}$ and $f(x,y) = 2/(2-x^2-y^2)^2$  with the compatible discretization \eqref{time-marching3} } \label{tab2}
\end{table}
\begin{figure}[tbp]
\begin{center}
\includegraphics[angle=0, height=4.5cm]{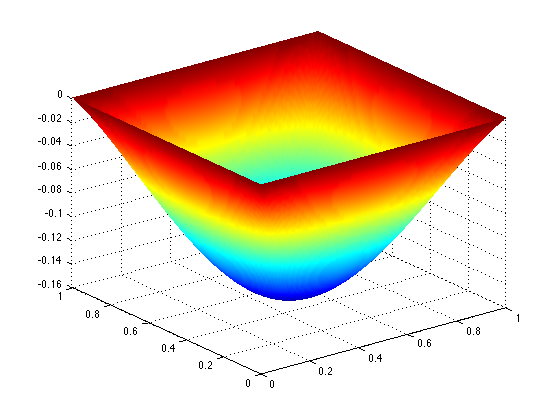} 
\end{center}
\caption{No known exact solution, $f(x,y)=1, g(x,y)=0, h=1/2^7,  \tir{\nu}=50$ with the compatible discretization \eqref{time-marching3}
} \label{fig1}
\end{figure}

\begin{figure}[tbp]
\begin{center}

\includegraphics[angle=0, height=4.5cm]{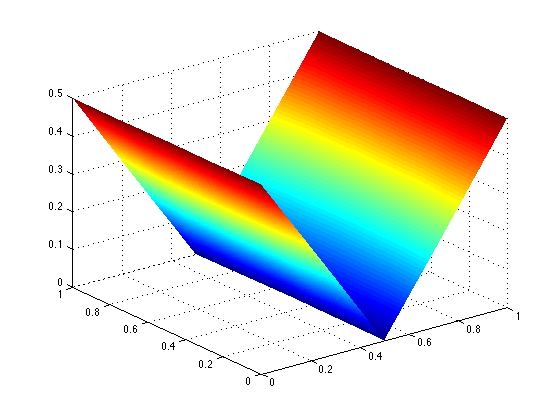}

\end{center}
\caption{$u(x,y)= |x-1/2|$ with $g(x,y)= |x-1/2|$ and  $f(x,y)=0$, $h=1/2^{2},  \tir{\nu} =5$ with the central discretization \eqref{time-marching}
} \label{fig2}
\end{figure}

\begin{table}
\begin{tabular}{c|cccccc} 
 \multicolumn{6}{c}{$h$}\\
 & $1/2^2$ &  $1/2^3$ &  $1/2^4$&  $1/2^5$ &  $1/2^6$ &  $1/2^7$ \\ \hline
Error & 3.91   $10^{-3}$ & 1.03 $10^{-3}$ & 2.66 $10^{-4}$ & 6.70 $10^{-5}$ & 1.68 $10^{-5}$ & 4.20 $10^{-6}$ \\
Newton & 0.0930 & 0.0354  & 0.1287   &  0.5796  & 3.5300  & 56.457  \\
$\tir{\nu}=4$  & 0.0334  & 0.0504 &  0.0679   & 0.1773   & 0.6721 &3.5300  \\
$\tir{\nu}=2.5$  & 0.0204 & 0.0359 &     &    &  &   \\
\end{tabular}
\caption{Computation times for Newton's method and time marching method \eqref{time-marching} for $u(x,y)=e^{(x^2+y^2)/2}$ }\label{tab3}
\end{table}

\begin{rem}
The convergence rate of the discretization is dictated by the estimate in Lemma \ref{mov-ball} and the expression of $\rho$. 
\end{rem}



\begin{rem} \label{meth1}
We point out that in dimension $d=2$, the method of this paper readily extends to the discretization
\begin{align} \label{m1h-2D}
\frac{1}{2} \div_h [ (\cof \mathcal{H}_2 u_{h} )^T D_h u_{h} ]  =r_h(f) \, \text{in} \, \Omega_h^0, u_{h} =r_h(g) \, \text{on} \, \partial \Omega_h.
\end{align}
In the above formulation we did not use the symmetric part of $\mathcal{H}_2 u_{h}$. This is because in dimension 2, $\cof \sym \mathcal{H}_2 v_h = \sym \cof \mathcal{H}_2 v_h$. And hence from the positive definiteness of $\sym \mathcal{H}_2 v_h$ we also get the positive definiteness of $\sym \cof \mathcal{H}_2 v_h$. The latter is an essential step in our approach for the proof of convergence of the time marching method for solving \eqref{m1h}.
See \eqref{contrac-p0} and \eqref{ess-step}. 
\end{rem}

\begin{rem} \label{meth2}
If we define $\mathcal{\hat{H}}_d(v_h)$ as $\tir{D}_h \tir{D}_h v_h$, we no longer have $\tr (\mathcal{\hat{H}}_d(v_h)) = \Delta_h (v_h)$. However it can be readily checked that the resulting discrete Hessian matrix is symmetric. Moreover, the proof of the divergence free row property of the cofactor matrix, \cite[p. 440]{Evans1998}, extends to the discrete case to yield $\div_h \cof \mathcal{\hat{H}}_d(v_h)=0$. Thus 
we obtain
$$
\frac{1}{d}  \div_h [(\cof \mathcal{\hat{H}}_d(v_h) ) D_h v_h] = \frac{1}{d} (\cof \mathcal{\hat{H}}_d(v_h) ):  \tir{D}_h D_h v_h, 
$$
and 
since $\det D^2 v = d^{-1} (\cof D^2 v):D^2 v$, we get
$$
 \frac{1}{d}  \div_h [(\cof \mathcal{\hat{H}}_d(r_h v) ) D_h r_h v](x) - \det D^2 v (x)  = O(h).
$$
As for Remark \ref{quad-cvg}, we also obtain a quadratic convergence rate in the maximum norm for the discretization
$$
\frac{1}{d} \div_h [ (\cof \mathcal{\hat{H}}_d u_{h} ) D_h u_{h} ]  =r_h(f) \, \text{in} \, \Omega_{h}^0, u_{h} =r_h(g) \, \text{on} \, \partial \Omega_{h}.
$$
The same remarks apply to the discretization
$$
\frac{1}{d} \div_h [ (\cof  \tir{\mathcal{H}}_d(u_{h}) ) D_h u_{h} ]  =r_h(f) \, \text{in} \, \Omega_{h}^0, u_{h} =r_h(g) \, \text{on} \, \partial \Omega_{h},
$$
and we recall that the matrix $ \tir{\mathcal{H}}_d(u_{h})$ was defined in section \ref{std-common}.
\end{rem}

\section{Appendix} \label{appendix}
\subsection{Proof of Lemma \ref{det-ap-lem}}
\begin{proof}
Let $x \in \Omega_h^0$ and let $v_h, w_h \in \mathcal{M}(\Omega_h)$. We have for $i=1,\ldots,d$
\begin{equation} \label{leibniz}
\partial_-^i (v_h w_h) (x) = v_h(x) \partial_-^i w_h(x) + w_h(x-h e_i) \partial_-^i v_h (x).
\end{equation}
This follows from
\begin{align*}
h \partial_-^i (v_h w_h) (x) & = v_h(x) w_h(x) - v_h(x -h e_i) w_h(x -h e_i) \\
& = v_h(x) (w_h (x) - w_h(x-h e_i)) + w_h(x-h e_i) (v_h(x) - v_h(x -h e_i)).
\end{align*}
For a vector field $v_h=(v_{i,h})_{i=1,\ldots,d}$, we define the ''translation'' matrix $\tau v_h$  by
$$
(\tau v_h)_{ij}(x) = v_{j,h}(x-h e_i),
$$
and for a matrix field $A_h=(A_{ij,h})_{i,j=1,\ldots,d}$, we define its gradient $\tir{D}_h A_h $ as the matrix field with components
$$
(\tir{D}_h A_h)_{ij} = \partial^i_- A_{ij,h}.
$$
Thus we have
\begin{equation}  \label{obs222}
\div_h (A_h v_h) (x)= (\tir{D}_h A_h) :(\tau v_h) + A_h(x): (\tir{D}_h v_h(x))^T.
\end{equation}
This follows from the Leibniz rule \eqref{leibniz}. Indeed we have
\begin{align*}
(A_h v_h)_i & = \sum_{j=1}^d A_{ij,h} v_{j,h} \\ 
\div_h (A_h v_h) (x) & = \sum_{i=1}^d \partial_-^i (A_h v_h)_i  (x)  =  \sum_{i,j=1}^d \partial_-^i (A_{ij,h}(x) v_{j,h} (x) )\\
& = \sum_{i,j=1}^d  A_{ij,h}(x) \partial_-^i v_{j,h} (x) + v_{j,h}(x-h e_i) \partial_-^i   A_{ij,h}(x),
\end{align*}
 which proves the claim.
 
We conclude that for a grid scalar function $v_h$
\begin{align*}
 \div_h [(\cof \mathcal{H}_d (v_h) )^T D_h v_h ](x)  & = [\tir{D}_h (\cof \mathcal{H}_d (v_h) )^T ] : (\tau D_h v_h) \\ 
 & \quad + (\cof \mathcal{H}_d (v_h (x)) )^T: (\tir{D}_h D_ h v_h(x))^T.
\end{align*}
Therefore
\begin{align} \label{div-xt}
\begin{split}
 \div_h [(\cof \mathcal{H}_d (v_h) )^T D_h v_h ](x)  & = [\tir{D}_h (\cof \mathcal{H}_d (v_h) )^T ] : (\tau D_h v_h) \\ 
 & \qquad \quad + (\cof \mathcal{H}_d (v_h (x)) ): \mathcal{H}_d (v_h (x)).
 \end{split}
\end{align}

To prove the convergence rates \eqref{ap03}--\eqref{ap04}, we first make some observations. 
For given $C^2(\tir{\Omega})$ functions $v_1$ and $v_2$, if $v_1 = v_{1,h} + O(h^2)$ and $v_2 = v_{2,h} + O(h^2)$, then
$$
v_1 v_2 = v_{1,h} v_{2,h} + O(h^2).
$$
Indeed for $x \in \mathbb{Z}_h$
\begin{align*}
v_1(x) v_2(x) & = (v_{1,h}(x) + O(h^2))( v_{2,h}(x) + O(h^2)) \\
& = v_{1,h}(x) v_{2,h}(x) +  v_{1,h}(x) O(h^2) +  v_{1,h}(x) O(h^2) + O(h^4).
\end{align*}
But since $v_{1,h} = v_1 + O(h^2)$, $v_{1,h}$ is uniformly bounded in $x$ and $h$. The same property holds for $v_{2,h}$. The claim follows. And it clearly extends to a finite product of $C^2(\tir{\Omega})$ functions $v_i$ such that
$v_i = v_{i,h} + O(h^2), i=1,\ldots,d$. Similarly for $v_1 = v_{1,h} + O(h)$ and $v_2 = v_{2,h} + O(h)$, then
$$
v_1 v_2 = v_{1,h} v_{2,h} + O(h).
$$

Now, we have
\begin{align} \label{div-ap-p1}
\frac{1}{d} (\cof D^2 v): D^2 v - \frac{1}{d} (\cof \mathcal{H}_d(r_h v) ): \mathcal{H}_d(r_h v) =O(h),
\end{align}
since each product in the second term on the left is at least a linear approximation of a corresponding product in the first term on the left. 

On the other hand, each entry of $\tau D_h v_h$ is a linear approximation of a directional derivative of $v$. Similarly, each entry of $\tir{D}_h (\cof \mathcal{H}_d (v_h) )^T$ is a linear approximation of the corresponding third order derivative of $v$. Thus
\begin{align} \label{div-ap-p2}
[\tir{D}_h (\cof \mathcal{H}_d (v_h) )^T ] : (\tau D_h v_h)  -( \div \cof D^2 v) \cdot D v = O(h). 
\end{align}
Finally by \eqref{div-xt}, we have
\begin{align*}
\det D^2 v(x) & -  \frac{1}{d}  \div_h [(\cof \mathcal{H}_d (r_h v) )^T D_h r_h v](x)  = \frac{1}{d} (\cof D^2 v (x) ): D^2 v(x) \\
& \qquad - \frac{1}{d} (\cof \mathcal{H}_d(r_h v (x)) ): \mathcal{H}_d(r_h v (x)) \\
 & \qquad \qquad  -  \frac{1}{d} [\tir{D}_h (\cof \mathcal{H}_d (v_h) )^T ] : (\tau D_h r_h v),
\end{align*}
and \eqref{ap03} follows from \eqref{div-ap-p1} and \eqref{div-ap-p2}.

One proves  \eqref{ap04} similarly to  \eqref{ap03} since  each entry of $\cof \sym \mathcal{H}_d(r_h v)$ is also a linear approximation of the corresponding entry in $\cof D^2 v$.  
\end{proof}


\section*{Acknowledgements}
This work began when the author was supported in part by a 2009-2013 Sloan Foundation Fellowship and continued while the author was in residence at
the Mathematical Sciences Research Institute (MSRI) in Berkeley, California, Fall 2013. The MSRI receives major funding from the National Science
Foundation under Grant No. 0932078 000. The author was partially supported by NSF DMS grant No 1319640.


\end{document}